\newtheorem{Theorem}{Theorem}[section]
\newtheorem{Corollary}[Theorem]{Corollary}
\newtheorem{Lemma}[Theorem]{Lemma}
\newtheorem{Conjecture}[Theorem]{Conjecture}
\newtheorem{Question}[Theorem]{Question}
 { \theoremstyle{definition}

 }
\numberwithin{equation}{section}
\def\BN{\mathbb N}
\def\BZ{\mathbb Z}
\def\BQ{\mathbb Q}
\def\BC{\mathbb C}
\def\BP{\mathbb P}
\def\calF{\mathcal F}
\def\calL{\mathcal L}
\def\calO{\mathcal O}
\def\calK{\mathcal K}
\def\calP{\mathcal P}
\def\calM{\mathcal M}
\def\s{\sigma}
\def\la{\langle}
\def\ra{\rangle}
\def\SL{{\rm SL}}
\def\SU{{\rm SU}}
\def\pt{\partial}
\def\a{\alpha}
\def\b{\beta}
\def\ve{\varepsilon}
\def\coeff{\operatorname{coeff}}
\def\Li{\operatorname{Li}}
\def\GV{\operatorname{GV}}
\def\GW{\operatorname{GW}}
\def\Exp{\operatorname{Exp}}
\def\ch{\operatorname{ch}}
\def\End{\operatorname{End}}
\begin{document}
\allowdisplaybreaks

\newcommand{\arXivNumber}{2101.07490}

\renewcommand{\PaperNumber}{021}

\FirstPageHeading

\ShortArticleName{On the Quantum K-Theory of the Quintic}

\ArticleName{On the Quantum K-Theory of the Quintic}

\Author{Stavros GAROUFALIDIS~$^{\rm a}$ and Emanuel SCHEIDEGGER~$^{\rm b}$}

\AuthorNameForHeading{S.~Garoufalidis and E.~Scheidegger}

\Address{$^{\rm a)}$~International Center for Mathematics, Department of Mathematics,\\
\hphantom{$^{\rm a)}$}~Southern University of Science and Technology, Shenzhen, China}
\EmailD{\href{mailto:stavros@mpim-bonn.mpg.de}{stavros@mpim-bonn.mpg.de}}
\URLaddressD{\url{http://people.mpim-bonn.mpg.de/stavros}}

\Address{$^{\rm b)}$~Beijing International Center for Mathematical Research, Peking University, Beijing, China}
\EmailD{\href{mailto:esche@bicmr.pku.edu.cn}{esche@bicmr.pku.edu.cn}}

\ArticleDates{Received October 21, 2021, in final form March 03, 2022; Published online March 21, 2022}

\Abstract{Quantum K-theory of a smooth projective variety at genus zero is a~collection of integers that can be assembled into a generating series $J(Q,q,t)$ that satisfies a system of linear differential equations with respect to $t$ and $q$-difference equations with respect to~$Q$. With some mild assumptions on the variety, it is known that the full theory can be reconstructed from its small $J$-function $J(Q,q,0)$ which, in the case of Fano manifolds, is a vector-valued $q$-hypergeometric function. On the other hand, for the quintic 3-fold we formulate an explicit conjecture for the small $J$-function and its small linear $q$-difference equation expressed linearly in terms of the Gopakumar--Vafa invariants. Unlike the case of quantum knot invariants, and the case of Fano manifolds, the coefficients of the small linear $q$-difference equations are not Laurent polynomials, but rather analytic functions in two variables determined linearly by the Gopakumar--Vafa invariants of the quintic. Our conjecture for the small $J$-function agrees with a proposal of Jockers--Mayr.}

\Keywords{quantum K-theory; quantum cohomology; quintic; Calabi--Yau manifolds; Gro\-mov--Witten invariants; Gopakumar--Vafa invariants; $q$-difference equations; $q$-Frobenius method; $J$-function; reconstruction; gauged linear $\sigma$ models; 3d-3d correspondence; Chern--Simons theory; $q$-holonomic functions}

\Classification{14N35; 53D45; 39A13; 19E20}

\section{Introduction}\label{sec.intro}

\subsection[Quantum K-theory, the small J-function and its q-difference equation]{Quantum K-theory, the small $\boldsymbol{J}$-function and its $\boldsymbol{q}$-difference equation}\label{sub.qk}

The K-theoretic Gromov--Witten invariants of a compact K\"ahler manifold
$X$ (often omitted from the notation) is a collection of integers
(see~\cite[p.~6]{Givental-Tonita})
\begin{gather}\label{EL}
\big\langle E_1 L^{k_1}, \dots, E_n L^{k_n} \big\rangle_{g,n,d}
\end{gather}
defined for vector bundles $E_1,\dots,E_n$ on $X$ and nonnegative integers
$k_1,\dots,k_n$ as the holomorphic Euler characteristic of
$\calO^{{\rm vir}} \otimes
\big( {\otimes}_{i=1}^n {\rm ev}_i^*(E_i) \otimes L_i^{k_1}\big)$ over
the moduli space $\overline{\calM}^{X,d}_{g,n}$ of genus $g$ degree $d$
stable maps to $X$ with $n$ marked points. Here, $L_1,\dots,L_n$ denote the
line (orbi)bundles over $\overline{\calM}^{X,d}_{g,n}$ formed
by the cotangent lines to the curves at the respective marked points.
A definition of these integers was given by Givental and
Lee~\cite{Givental:WDVV, Lee:foundations}. These numerical invariants can be
assembled into a generating series which at genus zero can be used to define
an associative deformation of the product of the K-theory ring $K(X)$ of $X$.

There are several ways to assemble the integers~\eqref{EL} into generating series,
and reconstruction theorems relate these generating series and often determine
one from the other. This is reviewed in Section~\ref{sub.reconstruct}.
Our choice of generating series will be the so-called small $J$-function
\begin{gather}\label{smallJ}
J_X(Q,q,0) = (1-q)\Phi_0 + \sum_{d} \sum_\a
\left\la \frac{\Phi_\a}{1-qL} \right\ra_{0,1,d} \Phi^\a Q^d
\in K(X) \otimes \calK_-(q)[[Q]]
\end{gather}
(with the notation of Section~\ref{sub.notation}), which determines the genus~0 quantum K-theory $X$, i.e., the
integers~\eqref{EL}~\cite[Theorem~1.1, Lemma~3.3]{Iritani:rec} with $g=0$,
as well as the genus 0 permutation-equivariant quantum K-theory
$X$~\cite{Givental:equivariantVIII} (when $K(X)$ is generated by line bundles).

The small $J$-function is a vector-valued function (taking values in the rational
vector space $K(X)$) that obeys a system of linear $q$-difference
equations~\cite{Givental-Lee,Givental-Tonita}, giving rise to
matrices $A_i(Q,q,0) \in K(X) \otimes \calK_+(q)[[Q]]$, for $i=1,\dots,r$
which can also be used to reconstruct the genus~$0$ quantum K-theory of
$X$~\cite[Lemma~3.3]{Iritani:rec}. Concretely, for $X=\BC\BP^N$, the small
$J$-function is given by a~$q$-hypergeometric
formula~\cite{Givental-Lee,Givental-Tonita, Lee:foundations}
\begin{gather}\label{eq.JCPN}
J_{\BC\BP^N}(Q,q,0) = (1-q)\sum_{d=0}^\infty \frac{Q^d}{((1-x)q;q)_d^{N+1}}
\in K\big(\BC\BP^N\big)\otimes \calK_-(q)[[Q]],
\end{gather}
where $(z;q)_d=\prod_{j=0}^{d-1}\big(1-q^jz\big)$ for $d \geq 0$, and
\begin{gather*}
K\big(\BC\BP^N\big) = \BQ[x]/\big(x^{N+1}\big)
\end{gather*}
is the K-theory ring with basis $\big\{1,x,\dots,x^N\big\}$ where $1-x$
is the class of $\calO(1)$.\footnote{The K-theory ring is also written as~\cite[Section~4.1]{Iritani:rec}
$K\big(\BC\BP^N\big) = \BQ\big[P,P^{-1}\big]/\big((1-P)^{N+1}\big)$
as the Grothendieck group of locally free sheaves on projective space,
where $P=\calO_{\BP^N}(-1)$ in which case the small $J$-function takes the
form $J_{\BC\BP^N}(Q,q,0) = (1-q)\sum_{d=0}^\infty \frac{Q^d}{(Pq;q)_d^{N+1}}$.}
The corresponding matrix $A(Q,q,0)$ of the vector-valued $q$-holonomic
function $J(Q,q,0)$ is given by~\cite[Section~4.1]{Iritani:rec}
\begin{gather}\label{eq.ACPN}
A(Q,q,0) = I -
\begin{pmatrix}
 0 & 0 & \ldots & 0 & Q \\
 1 & 0 & \ldots & 0 & 0 \\
 0 & 1 & \ldots & 0 & 0 \\
 \vdots & \vdots & \ddots & \vdots & \vdots \\
 0 & 0 & \ldots & 1 & 0
\end{pmatrix}
\end{gather}
in the above basis of $K(\BC\BP^N)$. It is remarkable that either~\eqref{eq.JCPN}
or~\eqref{eq.ACPN} give the complete determination of all the integers~\eqref{EL}
for $\BC\BP^N$. Observe that the small $J$-function of $\BC\BP^N$ is given by a
vector-valued $q$-hypergeometric formula, which is always $q$-holonomic (as
follows from Zeilberger et al.~\cite{PWZ,WZ,Zeilberger}), and as a result the
entries of $A(Q,q,0)$ (as well as the coefficients of the small quantum product)
are polynomials in $Q$ and $q$. It turns out that the small $J$-function of
Grassmanianns, flag varieties, homogeneous spaces and more generally Fano manifolds
is $q$-hypergeometric as shown by many researchers; see,
e.g.,~\cite{Anderson:finiteness, Taipale, Tonita:quintic} and references therein.
On the other hand, new phenomena are expected for the case of general Calabi--Yau
manifolds, and particularly for the quintic. Our motivation to study the case
of the quintic was two-fold, coming from numerical observations concerning
coincidences of quantum K-theory counts and quantum cohomology counts (given
below), as well as a comparison of the linear $q$-difference equations in quantum
K-theory with those in Chern--Simons theory (such as the $q$-difference equation
of the colored Jones polynomial of a knot~\cite{GL:qholo}).

Our results give a relation between quantum K-theory and quantum cohomology
of the quintic in two different limits, namely $q=1$ (see
Corollary~\ref{cor.numbers}) and $q=0$ (see Corollary~\ref{cor.ctttq}),
and propose a linear expression of the small $J$-function of the quintic
in terms of its Gopakumar--Vafa invariants (see Conjecture~\ref{conj.1}).

\subsection[The small J-function for the quintic]{The small $\boldsymbol{J}$-function for the quintic}\label{sub.Jquintic}

Quantum K-theory was developed by analogy with quantum cohomology (or
Gromov--Witten theory), a theory that deforms the cohomology ring $H(X)$ of $X$
and whose corresponding numerical invariants are rational numbers (known as
Gromov--Witten invariants) or integers in the case of a Calabi--Yau threefold
(known as the Gopakumar--Vafa invariants).
A standard reference is~\cite{Candelas:pair} and the book~\cite{Cox-Katz}.
For the quintic 3-fold $X$, the first six values of the GW and the GV invariants
are given by
\begin{table}[h!]\def\arraystretch{1.3}\centering\small
\begin{tabular}{|c|l|l|l|l|l|l|} \hline
 $d$ & 1 & 2 & 3 & 4 & 5 & 6 \\ \hline
 $\GW_d$ & $\frac{2875}{1}$ & $\frac{4876875}{8}$ & $\frac{8564575000}{27}$ &
 $\frac{15517926796875}{64}$ & $\frac{229305888887648}{1}$
 & $\frac{248249742157695375}{1}$ \\ \hline
 $\GV_d$ & 2875 & 609250 & 317206375 & 242467530000 & 229305888887625
 & 248249742118022000
 \\ \hline
 \end{tabular}
\end{table}

\noindent
with $2875$ being the famous number of rational curves in the quintic.
The two sets of invariants are related by the following multi-covering formula
\begin{gather*}
\GV_n = \sum_{d|n} \frac{\mu(d)}{d^3} \GW_{n/d}, \qquad
\GW_n = \sum_{d|n} \frac{1}{d^3} \GV_{n/d}.
\end{gather*}
In~\cite[Section~6.5]{Tonita:quintic}, Tonita gave an algorithm to
compute the quantum K-theory of the quintic and using it, he found that
\begin{gather*}
\la 1 \ra_{0,1,1}=2875,
\end{gather*}
where 2875 coincides with the famous number of lines in the quintic.
Going further, (see Jockers--Mayr~\cite{Jockers:qK,Jockers:3d}
and equation~\eqref{eq.JJq=0} below) one finds that
 \begin{subequations}\label{012-016}
 \begin{gather}
 \label{012}
\la 1 \ra_{0,1,2} = 620750 = 609250 + 4 \cdot 2875,
\\ \label{013}
\la 1 \ra_{0,1,3} = 317232250 = 317206375 + 9 \cdot 2875,
\\ \label{014}
\la 1 \ra_{0,1,4} = 242470013000 = 242467530000 + 4 \cdot 609250 + 16 \cdot 2875,
\\ \label{015}
\la 1 \ra_{0,1,5} = 229305888959500 = 229305888887625 + 25 \cdot 2875,
\\
 \la 1 \ra_{0,1,6} = 248249743392434250 \nonumber\\
\hphantom{\la 1 \ra_{0,1,6}}{} = 248249742118022000 +4 \cdot 317206375 + 9 \cdot 609250 + 36 \cdot 2875\label{016}
\end{gather}
\end{subequations}
are nearly equal to $\GV$ invariants of the quintic, and more precisely
matched with linear combinations of $\GV$ invariants. Surely this is not
a coincidence and suggests that the $\GV$ invariants can fully reconstruct the
quantum K-theory invariants. In~\cite{Givental-Tonita} this ``coincidence''
is proven in abstractly. Givental and Tonita give a complete solution in
genus-0 to the problem of expressing K-theoretic GW-invariants of
a compact complex algebraic manifold in terms of its cohomological
GW-invariants. One motivation for our work is to give an explicit formula (see
Conjecture~\ref{conj.1} below) of this abstract statement.
To phrase our conjecture, recall that the rational K-theory of the quintic
3-fold $X$ is given by
\begin{gather}\label{eq.KX}
K(X) = \BQ[x]/\big(x^4\big)
\end{gather}
is the K-theory ring with basis $\{\Phi_\a\}$ for $\a=0,1,2,3$ where
$\Phi_\a=x^\a$. Here $1-x$ is the class of~$\calO(1)|_X$. We define
\begin{subequations}\label{eq.a-eq.b}
\begin{gather}\label{eq.a}
5 a(d,r,q) = \frac{dr}{1-q}+\frac{d q}{(1-q)^2},
\\
\label{eq.b}
5 b(d,r,q) = \frac{rd+r^2-d}{1-q}+\frac{d}{(1-q)^2} - \frac{q+q^2}{(1-q)^3}.
\end{gather}
\end{subequations}

\begin{Conjecture} \label{conj.1} The small $J$-function of the quintic is expressed linearly in terms of the GV-invariants by
\begin{gather}\label{Jquintic}
\frac{1}{1-q} J(Q,q,0) = 1 + x^2 \sum_{d,r \geq 1} a(d,r,q^r) \GV_d Q^{dr}
+ x^3 \sum_{d,r \geq 1} b(d,r,q^r) \GV_d Q^{dr} .
\end{gather}
\end{Conjecture}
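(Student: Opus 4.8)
The plan is to derive the small $J$-function of the quintic from its K-theoretic $I$-function and then to reorganize the resulting $q$-series into the Gopakumar--Vafa form on the right-hand side of~\eqref{Jquintic}. Concretely, I would first write down the vector-valued $q$-hypergeometric $I$-function of the quintic as a complete intersection in $\BP^4$ coming from its gauged linear $\sigma$ model (this is the source of the Jockers--Mayr proposal \cite{Jockers:qK,Jockers:3d}): a series $I(Q,q)\in K(X)\otimes\calK_-(q)[[Q]]$ whose degree-$d$ coefficient carries the denominator $((1-x)q;q)_d^5$ from the ambient $\BP^4$ together with a degree-$5$ numerator factor enforcing the quintic constraint. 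Because the quintic is Calabi--Yau, the passage from $I$ to the small $J$-function is governed by a $q$-difference analogue of the mirror map together with a Birkhoff factorization removing the $\calK_+(q)$ part; I would implement this through the $q$-Frobenius method to produce the canonical solution whose $\Phi_0$-component is $1$ and whose $x$-component is normalized to vanish, matching the leading $1$ and the absence of an $x$-term in~\eqref{Jquintic}.

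The heart of the argument is the resummation from Gromov--Witten to Gopakumar--Vafa invariants. After the previous step, each coefficient of $x^2$ and $x^3$ in $\frac{1}{1-q}J(Q,q,0)$ is an explicit rational function of $q$ multiplied by a $\GW$-invariant and summed over $Q^n$. Using the multi-covering relation between the $\GW_n$ and $\GV_d$ recorded in the introduction, I would reindex the degree-$n$ sum as a double sum over a base degree $d$ and a multiplicity $r$ with $n=dr$, collecting for each $d$ the contribution of all its multiples. The claim to verify is that the per-multiple building blocks are \emph{exactly} $a(d,r,q^r)$ and $b(d,r,q^r)$. A clean consistency check is the leading behaviour as $q\to1$: writing $q=e^{-\hbar}$ gives $1-q^r\sim r\hbar$, so the top pole of $5b(d,r,q^r)$ is $\sim -2/(r^3\hbar^3)$ and that of $5a(d,r,q^r)$ is $\sim d/(r^2\hbar^2)$, reproducing the cohomological multicover weights $r^{-3}$ and $r^{-2}$; this is the content of the $q=1$ limit (Corollary~\ref{cor.numbers}). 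Independently, the $q=0$ specialization of these blocks should reproduce the integers displayed in~\eqref{012-016}, i.e.\ Corollary~\ref{cor.ctttq}. Matching these two limits already pins down the polar parts and the constant terms of $a$ and $b$.

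To make the scheme rigorous rather than merely consistent, I would use the Givental--Tonita reconstruction~\cite{Givental-Tonita}, which guarantees abstractly that the genus-$0$ K-theoretic invariants of the quintic are determined by its cohomological $\GW$-invariants. The strategy is then to render their transform explicit along the one-dimensional Novikov direction of the quintic: expand it as an Euler--Maclaurin type $q$-expansion around each root of unity $q=\zeta$, and show that the surviving contributions organize into poles at $q=1$ of order at most $3$ (the dimension of $X$), with residues depending linearly on the $\GV_d$. Matching these residues against the cohomological $J$-function of the quintic, known from the mirror theorem, would then force the coefficients to be precisely $a$ and $b$.

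The main obstacle I expect is exactly this last explicitness: the Givental--Tonita statement is a membership condition for the image of a transform rather than a closed formula, so promoting it to the two-variable rational functions $a(d,r,q)$ and $b(d,r,q)$ for \emph{all} $r$ --- not just their most singular $q\to1$ behaviour --- is the genuinely hard step, and is presumably why the result is stated as a conjecture. An alternative route that sidesteps the adelic machinery is to prove directly that both sides of~\eqref{Jquintic} satisfy the same small linear $q$-difference equation and then match finitely many initial coefficients as in~\eqref{012-016}; but since that $q$-difference equation is itself conjecturally expressed through the $\GV_d$, establishing it independently is no easier, and the finiteness of the required initial data would still have to be justified by a holonomic-rank bound.
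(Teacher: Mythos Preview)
The statement you are attempting to prove is labelled a \emph{Conjecture} in the paper, and the paper does not contain a proof of it. What the paper provides (Section~\ref{sec.algorithm}) is a verification modulo $O(Q^7)$: starting from the explicit $q$-hypergeometric value $J(Q,q,t^*)=I_{\calO(5)}$ of~\eqref{eq.JXt*-2} lying on the Lagrangian cone, the authors apply Givental's flow~\eqref{eq:flow} degree by degree in $Q$, solving the nonlinear fixed-point equation~\eqref{eq.Je} for the correction $\ve(x,Q,q)$ at each order, and then check that the resulting $J_d(q)$ for $d\le 6$ match the right-hand side of~\eqref{Jquintic}. There is no closed-form argument in the paper, and the authors explicitly say the formula was \emph{guessed} from this data.

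Your proposal is therefore not competing against a proof but against a finite computation, and you correctly recognise in your final paragraph that the hard step---making the Givental--Tonita adelic transform explicit enough to pin down $a$ and $b$ for all $(d,r)$, not only their leading $q\to1$ poles---is precisely why the statement remains conjectural. So what you have written is an honest outline of what a proof would need, not a proof; the gap you name is real and is the same gap the authors leave open. Two smaller points of comparison: the paper's passage from $I$ to the small $J$-function is via the plethystic-exponential flow~\eqref{eq:flow}, not via a Birkhoff factorisation of the $I$-function (Birkhoff enters only later, in Section~\ref{sub.thm1}, to extract the $T$-matrix from the already-known $J$); and you have interchanged the two corollaries---Corollary~\ref{cor.numbers} is the $q=0$ specialisation that reproduces the integers~\eqref{012-016}, while Corollary~\ref{cor.ctttq} concerns the $q=1$ limit of the Yukawa coupling, not of the $J$-function.
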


It is interesting to observe that the right hand side of~\eqref{Jquintic}
is a meromorphic function of $q$ with poles at roots of unity of bounded
order~3. In Section~\ref{sec.algorithm} we verify the above conjecture
modulo $O\big(Q^7\big)$ by an explicit calculation. Without doubt,
Conjecture~\ref{conj.1} concerns not only the quintic 3-fold, but
Calabi--Yau 3-folds with $h^{1,1}=1$ (there are plenty of those, see,
e.g.,~\cite{AvEvSZ}) and beyond. In contrast to the case of $\BC\BP^N$
(see~\eqref{eq.JCPN}) or the case of Fano manifolds,
the small $J$-function of the quintic is not hypergeometric. The above
conjecture was formulated independently by Jockers--Mayr~\cite[p.~10]{Jockers:qK}
and a comparison between their formulation and ours is given in
Section~\ref{sub.comparison}. Our conjecture also agrees with the
results of Jockers--Mayr presented in~\cite[Table~6.1]{Jockers:3d}.
Let us introduce the following multi-covering notation
\begin{gather*}
\GV^{(\gamma)}_n = \sum_{d|n} d^\gamma \GV_d.
\end{gather*}

Then, we have the following.
\begin{Corollary} \label{cor.q=0}
 We have
 \begin{gather}
 5 J(Q,0,0) = 5 + x^2 \sum_{n=1}^\infty n \GV^{(0)}_n Q^n + x^3
 \sum_{n=1}^\infty \big(n \GV^{(0)}_n+n^2\GV^{(-2)}_n \big) Q^n\nonumber \\
 \hphantom{5 J(Q,0,0)}{}
 = 5 + \big(2875 Q + 1224250 Q^2 + 951627750 Q^3 + 969872568500 Q^4 +
 \cdots \big) x^2\nonumber \\
 \hphantom{5 J(Q,0,0)=}{} +
\big(5750 Q + 1845000 Q^2 + 1268860000 Q^3 + 1212342581500 Q^4 +
 \cdots \big) x^3. \!\!\!\label{eq.Jq=0}
\end{gather}
\end{Corollary}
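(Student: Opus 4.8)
The plan is to obtain the corollary as a direct specialization of Conjecture~\ref{conj.1} at $q=0$, followed by a reindexing of the resulting double sums. First I would note that $q=0$ is a regular point of the right-hand side of~\eqref{Jquintic} (its only singularities are at roots of unity), so the substitution is legitimate. At $q=0$ one has $q^r=0$ for every $r\geq 1$ and $1/(1-q)=1$, so the left-hand side of~\eqref{Jquintic} collapses to $J(Q,0,0)$.

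Next I would evaluate the coefficient functions at the second argument $0$. From~\eqref{eq.a} one gets $5a(d,r,0)=dr$, since the term $dq/(1-q)^2$ vanishes. From~\eqref{eq.b} one gets $5b(d,r,0)=(rd+r^2-d)+d=rd+r^2$, where the summand $-d$ in the first term cancels against the $d$ in the second term while the third term vanishes. Hence at $q=0$ the conjecture reads $5J(Q,0,0)=5+x^2\sum_{d,r\geq 1}dr\,\GV_d Q^{dr}+x^3\sum_{d,r\geq 1}(rd+r^2)\GV_d Q^{dr}$.

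Then I would reindex each double sum by $n=dr$, so that for fixed $n$ the inner sum runs over divisors $d\mid n$ with $r=n/d$. Under this substitution $dr=n$, whereas $r^2=n^2/d^2$. Therefore $\sum_{d\mid n}dr\,\GV_d=n\sum_{d\mid n}\GV_d=n\,\GV^{(0)}_n$ and $\sum_{d\mid n}r^2\GV_d=n^2\sum_{d\mid n}d^{-2}\GV_d=n^2\,\GV^{(-2)}_n$, using the multi-covering notation $\GV^{(\gamma)}_n=\sum_{d\mid n}d^\gamma\GV_d$. Collecting the two contributions $rd$ and $r^2$ in the $x^3$-term then yields the first equality of~\eqref{eq.Jq=0}. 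The explicit power series follows by inserting the tabulated values $\GV_1=2875$, $\GV_2=609250$, $\GV_3=317206375$, $\GV_4=242467530000$ into the divisor sums; for example the $Q^2$ coefficient of the $x^2$-term is $2\,\GV^{(0)}_2=2(\GV_1+\GV_2)=1224250$.

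There is no essential obstacle here, as the statement is simply a specialization of the conjecture; the only points requiring care are the cancellation $-d+d=0$ in the evaluation of $b$ at $q=0$ and the bookkeeping of the reindexing, in particular recognizing that the weight $r^2$ produces the negative-exponent multi-covering sum $\GV^{(-2)}_n$ rather than $\GV^{(0)}_n$.
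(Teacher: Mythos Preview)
Your proposal is correct and follows exactly the approach implicit in the paper: the corollary is stated there without proof, as an immediate specialization of Conjecture~\ref{conj.1} at $q=0$, and your computation of $5a(d,r,0)=dr$, $5b(d,r,0)=rd+r^2$ together with the reindexing $n=dr$ is precisely the calculation one performs to make this specialization explicit.
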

The above corollary reproduces the invariants of
equations~\eqref{012-016}. To extract them, let
$[J(Q,q,0)]_{x^\a}$ denote the coefficient of $x^\a$ in
$J(Q,q,0)$. The next corollary is proven in
Section~\ref{sub.extract}.

\begin{Corollary} \label{cor.numbers}
We have
\begin{gather}\label{ELqnum}
\sum_{d \geq 1}
\left\la \frac{\Phi_\a}{1-qL} \right\ra_{0,1,d} Q^d
= \begin{cases}
 - 5 [J(Q,q,0)]_{x^2} + 5 [J(Q,q,0)]_{x^3} & \text{if} \quad \a=0,
 \\
 \hphantom{-} 5 [J(Q,q,0)]_{x^2} & \text{if} \quad \a=1,
 \\
 \hphantom{-} 0 & \text{if} \quad \a=2,3 .
 \end{cases}
 \end{gather}
 Setting $q=0$, it follows that
\begin{gather}
\sum_{d \geq 1} \la 1 \ra_{0,1,d} Q^d
 = \sum_{n=1}^\infty\! n^2 \GV^{(-2)}_n Q^n =
 2875 Q + 620750 Q^2\! + 317232250 Q^3\! + 242470013000 Q^4 \nonumber\\
\hphantom{\sum_{d \geq 1} \la 1 \ra_{0,1,d} Q^d=}{}
 + 229305888959500 Q^5 + 248249743392434250 Q^6 + \cdots\label{eq.JJq=0}
 \end{gather}
 matching with equations~\eqref{012-016} $($being
 the generating series of the K-theoretic versions of
 the GV-invariants, given in the second page and in {\rm \cite[Table~6.1]{Jockers:3d})}, as well as
\begin{gather}
\sum_{d \geq 1} \la \Phi_1 \ra_{0,1,d} Q^d = \sum_{n=1}^\infty \! n  \GV^{(0)}_n Q^n
 =
 2875 Q + 1224250 Q^2\! + 951627750 Q^3\! + 969872568500 Q^4 \nonumber\\
\hphantom{\sum_{d \geq 1} \la \Phi_1 \ra_{0,1,d} Q^d =}{}
 +1146529444452500 Q^5 + 1489498454615043000 Q^6 + \cdots.\nonumber\label{eq.JJq=02}
\end{gather}
\end{Corollary}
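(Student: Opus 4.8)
The relations~\eqref{ELqnum} are at their core a linear-algebra inversion of the defining expansion~\eqref{smallJ}, so the plan is to make the dual basis $\{\Phi^\a\}$ explicit and then read off coefficients. In~\eqref{smallJ} the numbers $\la \Phi_\a/(1-qL)\ra_{0,1,d}$ are paired against $\Phi^\a$, the basis of $K(X)=\BQ[x]/\big(x^4\big)$ dual to $\{\Phi_\a\}=\{x^\a\}$ with respect to the K-theoretic Poincar\'e pairing $(\Phi_\a,\Phi_\b)=\chi(X,\Phi_\a\otimes\Phi_\b)$. Thus the first step is to compute this pairing on the quintic.

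First I would evaluate $\chi(X,x^k)$ for $k=0,1,2,3$ by Hirzebruch--Riemann--Roch, $\chi(X,E)=\int_X \ch(E)\operatorname{td}(X)$, using the Calabi--Yau condition $c_1(X)=0$ together with the two characteristic numbers $\int_X H^3=5$ and $\int_X c_2(X)\,H=50$, where $H$ is the hyperplane class and $\ch(x)=1-e^{H}$. Since $\ch$ is a ring homomorphism one has $(\,x^\a,x^\b\,)=\chi\big(X,x^{\a+\b}\big)$, so the four numbers $\chi(X,x^k)$ (which are $0$ for $k=0$ and $\pm 5$ for $k=1,2,3$) determine the entire Gram matrix. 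Assembling and inverting this (anti-triangular) matrix gives each dual basis vector $\Phi^\a$ explicitly as a $\BQ$-combination of $1,x,x^2,x^3$.

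With the $\Phi^\a$ in hand, substituting into~\eqref{smallJ} and collecting the coefficient of each $x^\gamma$ expresses $[J(Q,q,0)]_{x^\gamma}$ (for $d\ge 1$) as an explicit linear combination of the four series $\sum_d\la\Phi_\a/(1-qL)\ra_{0,1,d}Q^d$. This system is invertible, and solving it recovers each generating series as a combination of the $[J]_{x^\gamma}$: the cases $\a=0,1$ involve only $[J]_{x^2}$ and $[J]_{x^3}$, while the cases $\a=2,3$ come out proportional to $[J]_{x^0}-(1-q)$ and to $[J]_{x^1}$. At this point I invoke Conjecture~\ref{conj.1}, whose right-hand side~\eqref{Jquintic} has $x^0$-coefficient $1-q$ and vanishing $x^1$-coefficient; feeding these two facts in forces the $\a=2,3$ series to vanish and pins down the $\a=0,1$ cases, establishing~\eqref{ELqnum}. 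For the evaluations~\eqref{eq.JJq=0} and~\eqref{eq.JJq=02} I would then specialize to $q=0$, where $\la\Phi_\a/(1-qL)\ra_{0,1,d}$ reduces to $\la\Phi_\a\ra_{0,1,d}$. Setting $q=0$ in~\eqref{eq.a}--\eqref{eq.b} gives $a(d,r,0)=dr/5$ and $b(d,r,0)=r(d+r)/5$, so Corollary~\ref{cor.q=0} supplies $[J(Q,0,0)]_{x^2}$ and $[J(Q,0,0)]_{x^3}$ as the divisor sums $\tfrac15\sum_n n\GV^{(0)}_n Q^n$ and $\tfrac15\sum_n\big(n\GV^{(0)}_n+n^2\GV^{(-2)}_n\big)Q^n$; here one uses the rearrangement $\sum_{d,r\ge 1}d^{\,i}r^{\,j}\GV_d Q^{dr}=\sum_n n^{\,j}\GV^{(i-j)}_n Q^n$ obtained by setting $n=dr$. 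Substituting into the $\a=0,1$ cases of~\eqref{ELqnum} and taking the appropriate combination then collapses to $\sum_n n^2\GV^{(-2)}_n Q^n$ and $\sum_n n\GV^{(0)}_n Q^n$, respectively.

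The main obstacle lies entirely in the first two steps: getting the K-theoretic pairing and the resulting dual basis exactly right, including the overall sign and normalization fixed by the orientation convention for $H$ and by $c_1(X)=0$, since every subsequent manipulation is formal bookkeeping. Once the Gram matrix of $(\,\cdot\,,\cdot\,)$ is correct, the inversion, the appeal to Conjecture~\ref{conj.1}, and the $q=0$ divisor-sum rearrangement are routine.
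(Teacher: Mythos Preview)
Your proposal is correct and follows essentially the same route as the paper's proof in Section~\ref{sub.extract}: compute the K-theoretic Gram matrix on the quintic, write down the dual basis~$\Phi^\a$, substitute into~\eqref{smallJ}, invert the resulting linear system, and then specialize to $q=0$ via Corollary~\ref{cor.q=0}. The paper simply states the Gram matrix~\eqref{innerproduct} and dual basis~\eqref{pp1} without derivation, whereas you propose to obtain them by Hirzebruch--Riemann--Roch; this is a harmless elaboration. One caution on the sign issue you already flag: with the paper's stated convention $1-x=\calO(1)|_X$ one would get $\chi(X,x)=-5$, but the Gram matrix~\eqref{innerproduct} actually has $(\Phi_0,\Phi_1)=+5$, consistent instead with $1-x=\calO(-1)$ (as in the footnote's $P$-convention); be sure to match the paper's matrix rather than the stated convention when carrying out the HRR computation.
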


\subsection[The linear q-difference equation for the quintic]{The linear $\boldsymbol{q}$-difference equation for the quintic}
\label{sub.qunticqdiff}

In this section we give an explicit formula for the small linear $q$-difference
equation for the quintic, assuming Conjecture~\ref{conj.1}. A key feature
of this formula is that the coefficients of this equation are analytic
(as opposed to polynomial) functions of $Q$ and $q$.
The small $J$-function $J(Q,q,0)$, viewed as a vector in the vector space
$K(X)$, forms the first column of the matrix $T(Q,q,0)$
of fundamental solutions of the small linear $q$-difference equation
in the basis $\big\{1,x,x^2,x^3\big\}$ of $K(X)$. The formula~\eqref{Jquintic} for the small
$J$-function and that fact that it is a cyclic vector of the linear
$q$-difference equation allows us to reconstruct the matrix $A(Q,q,0)$. See
also~\cite[Theorem~1.1, Lemma~3.3]{Iritani:rec}. To do so,
let us introduce some useful notation. If $f=f(d,r,q) \in \BQ(q)$ we denote
\begin{gather*}
[f] = \sum_{d,r \geq 1} f(d,r,q^r) \GV_d Q^{dr} .
\end{gather*}
With this notation, equation~\eqref{Jquintic} becomes
\[
\frac{1}{1-q}J(Q,q,0) = 1 + [a]x^2+[b]x^3
= \begin{pmatrix} 1 \\ 0 \\ [a] \\ [b] \end{pmatrix}
\]
in the basis $\big\{1,x,x^2,x^3\big\}$ of $K(X)$, where $a$, $b$ are given
by~\eqref{eq.a-eq.b}.
Further, we denote $(Ef)(d,r,q)\allowbreak =q^{d}f(d,r,q)$, and define
\begin{gather} \label{eq.cd}
 5c = \pi_+((1-E)a),\qquad
 5d = \pi_+(Ea+(1-E)b) ,
\end{gather}
with projections $\pi_\pm\colon \calK(q) \to \calK_\pm(q)$ given in
Section~\ref{sub.notation}. Explicitly, we have
\begin{gather*}
 5 c(d,r,q) = \frac{d^2}{1-q}, \\
 5 e(d,r,q) = \frac{d r}{1-q}-\frac{d(dq+q-d)}{(1-q)^2} .
\end{gather*}

Recall the $T$ matrix from~\cite[Proposition~2.3]{Iritani:rec} which is
a fundamental solution of the linear $q$-difference equation, and whose
first column is~$J$. The proof of the next theorem and its corollary is
given in Section~\ref{sub.thm1}.

\begin{Theorem} \label{thm.1}
 Conjecture~{\rm \ref{conj.1}} implies that the small $T$-matrix of the quintic
 is given by
 \begin{gather} \label{Tquintic}
 T(Q,q,0)=\begin{pmatrix}
 1 & 0 & 0 & 0 \\
 0 & 1 & 0 & 0 \\
 [a] & [c] & 1 & 0 \\
 [b] & [e] & 0 & 1
 \end{pmatrix}
\end{gather}
and the small $A$-matrix of the linear $q$-difference equation is
given by
 \begin{gather} \label{Aquintic}
 A=I-D^{\rm T}, \qquad
 D(Q,q,0)=\begin{pmatrix}
 0 & 1 & [a-c-Ea] & [b-e+Ea-Eb] \\
 0 & 0 & 1+[c-Ec] & [e+Ec-Ee] \\
 0 & 0 & 0 & 1 \\
 0 & 0 & 0 & 0
 \end{pmatrix} .
 \end{gather}
\end{Theorem}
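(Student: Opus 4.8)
The plan is to reconstruct the matrices $T$ and $A$ from the known first column $J$ using the structure of the linear $q$-difference equation that $J$ satisfies. By the cited result \cite[Proposition~2.3]{Iritani:rec}, the matrix $T(Q,q,0)$ is a fundamental solution whose first column is $J$, and the $A$-matrix governs the $q$-difference recursion $T(Q,q,0) = A(Qq,q,0)\, T(Qq,q,0)$ (or the appropriate normalization fixed in Section~\ref{sub.notation}). Since $J = (1-q)(1,0,[a],[b])^{\mathrm T}$ is a cyclic vector, the remaining columns of $T$ are determined by repeatedly applying the $q$-shift operator and the grading by powers of $x$ in $K(X) = \BQ[x]/(x^4)$. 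Concretely, the first step is to write out the defining $q$-difference relation componentwise and use the fact that the basis $\{1,x,x^2,x^3\}$ is filtered by the nilpotent action of $x$, so that the operator $A = I - D^{\mathrm T}$ has $D$ strictly upper triangular with respect to this filtration, reflecting that multiplication by $x$ raises degree. This immediately forces the block structure seen in \eqref{Tquintic} and \eqref{Aquintic}, with $1$'s on the subdiagonal coming from the classical (degree-raising) part and the GV-dependent entries sitting in the strictly upper-triangular positions.

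The second step is to identify the entries $[c]$ and $[e]$ in the second column of $T$. These arise because the second column is obtained from the first by applying the degree-shift structure: heuristically, $x\cdot J$ reduced modulo the relation and corrected by the $q$-shift produces a vector whose lower components are $[c]$ and $[e]$. The definitions in \eqref{eq.cd}, namely $5c = \pi_+((1-E)a)$ and $5d = \pi_+(Ea + (1-E)b)$ together with the explicit formulas for $5c(d,r,q)$ and $5e(d,r,q)$, are exactly engineered so that the positive-part projections $\pi_+$ produce genuine power series (the analytic coefficients). The key computation here is to verify that applying $(1-E)$, where $(Ef)(d,r,q) = q^d f(d,r,q)$, to the explicit $a$ and $b$ of \eqref{eq.a-eq.b} and then projecting via $\pi_+$ yields precisely the stated closed forms for $c$ and $e$; this is a finite rational-function manipulation in $\BQ(q)$ that I would carry out degree by degree in the partial-fraction expansion at $q=1$.

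The third step is to assemble the $A$-matrix. Given $T$ of the form \eqref{Tquintic}, one solves $A = T(Q,q,0)\, T(Qq,q,0)^{-1}$ (up to the normalization convention). Because $T$ is lower-triangular with $1$'s on the diagonal, its inverse is again lower-triangular and computable in closed form, and the product $T(Q)\,T(Qq)^{-1}$ telescopes into expressions of the form $[f] - [Ef]$, which is exactly why the off-diagonal entries of $D$ in \eqref{Aquintic} appear as $[a-c-Ea]$, $[c-Ec]$, $[e+Ec-Ee]$, and so on. The bracket notation linearizes the $q$-shift: $[Ef](Q,q) = [f](Qq^{?},q)$ tracks how the shift $Q \mapsto Qq$ interacts with the summation variable $r$, and I would check that $E$ on the $f(d,r,q)$ level corresponds correctly to the $Q$-shift at the level of $[\,\cdot\,]$. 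Verifying this compatibility is the crux: one must confirm that $\frac{1}{1-q}J(Qq,q,0)$ expressed through the brackets agrees with applying $E$ inside the summand, which is where the choice $(Ef)(d,r,q) = q^d f(d,r,q)$ becomes essential.

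\textbf{The main obstacle} I expect is precisely this bookkeeping of how the $q$-shift operator $E$ acting on the summand $f(d,r,q)$ relates to the $Q$-shift $Q \mapsto Qq$ acting on the generating series $[f]$, given that the exponent of $Q$ is the product $dr$ while $E$ inserts a factor $q^d$. Getting the interplay between the two summation indices $d$ and $r$ correct under the shift — and ensuring that the positive-part projections $\pi_+$ are applied consistently so that every bracketed entry remains an honest element of $\calK_+(q)[[Q]]$ rather than acquiring spurious poles — is the delicate point. Once that compatibility is pinned down, the remaining verification that $A = I - D^{\mathrm T}$ with the stated $D$ reduces to matching entries in the telescoped product $T(Q)\,T(Qq)^{-1}$, which is routine.
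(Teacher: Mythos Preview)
Your outline is broadly in the right spirit, but it misses the specific mechanism the paper uses to generate the columns of $T$, and this is not just a detail of bookkeeping: it is the core of the argument.

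The paper does not obtain the later columns of $T$ by ``multiplying $J$ by $x$ and correcting by the $q$-shift'' as you suggest. Instead, it forms the matrix of iterates
\[
\frac{1}{1-q}\bigl(J \;\big|\; (1-x)EJ \;\big|\; ((1-x)E)^2 J \;\big|\; ((1-x)E)^3 J\bigr)
\]
and then applies a \emph{Birkhoff factorization} $=TU$, with the $q$-coefficients of $T$ lying in $\calK_-(q)$ and those of $U$ in $\calK_+(q)$. The point is that although $J/(1-q)$ itself has coefficients in $\calK_-(q)$, the shifted vectors $((1-x)E)^j J/(1-q)$ do not, and the projection that isolates $T$ is exactly what produces the entries $[c]$ and $[e]$ via the formulas $5c=\pi_+((1-E)a)$ and $5e=\pi_+(Ea+(1-E)b)$. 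Your proposal mentions the projections $\pi_\pm$ but never says what they are being applied to; the two explicit $\pi_+$-identities displayed in Section~\ref{sub.thm1} of the paper (for $q^d$ times certain rational functions of $q$) are what actually pin down the second column. Without the operator $(1-x)E$ and the Birkhoff step, there is no canonical way to pass from the single vector $J$ to the specific $T$ of \eqref{Tquintic}.

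The same issue affects your computation of $A$. The connection matrix is not the bare $T(Q)\,T(qQ)^{-1}$: the relevant $q$-difference equation, following \cite[equation~(2)]{Iritani:rec}, is written in terms of the twisted shift $1-(1-x)E$ (the operator $P^{-1}q^{Q\partial_Q}$ in their notation), and it is this twist that accounts for the particular combinations $[a-c-Ea]$, $[e+Ec-Ee]$, etc., appearing in $D$. Your telescoping computation would produce the correct \emph{type} of expression, but matching the precise entries requires using $(1-x)E$ rather than $E$ alone. Your identified obstacle --- the compatibility $[Ef](Q,q)=[f](qQ,q)$ --- is real and you resolve it correctly (since $(q^r)^d=q^{dr}$), but it is downstream of the more fundamental omission above.
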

Note that the entries of $5D(Q,q,0)$ are in $\BZ[[Q]][q]$ and given explicitly
in equations~\eqref{eq.d13-eq.d24} below. Let us denote by
$c_{ttt}(Q,q,t)=5 D_{2,3}(Q,q,t)$, where $D_{i,j}$ denotes the $(i,j)$-entry
of the matrix $D$. In other words, we have
\begin{gather*}
\begin{split}
& c_{ttt}(Q,q) = 5 + \sum_{d,r \geq 1} d^2 \frac{1-q^{dr}}{1-q^r} \GV_d Q^{dr}\\
& \hphantom{c_{ttt}(Q,q)}{}
= \sum_{d =1}^\infty d^2 \GV_d
\big(\Li_0\big(Q^d\big) + \Li_0\big(qQ^d\big) + \dots + \Li_0\big(q^{d-1}Q^d\big)\big),
\end{split}
\end{gather*}
where $\Li_s$ denotes the $s$-polylogarithm function
$\Li_s(z)=\sum_{d \geq 1} z^d/d^s$.
Recall the genus 0 generating series (minus its quadratic part)
of the quintic~\cite{Candelas:pair,Cox-Katz}
\begin{gather*}
\calF(Q) = \sum_{n=1}^\infty \GW_n Q^n =
\frac{5}{6} (\log Q)^3 + \sum_{d=1}^\infty \GV_d \Li_3\big(Q^d\big)
\end{gather*}
and its third derivative
\begin{gather}\label{eq.cttt}
c_{ttt}(Q) =(Q \partial_Q)^3 \calF(Q)=
5+\sum_{d=1}^\infty d^3 \GV_d \Li_0\big(Q^d\big),
\end{gather}
where $\partial_Q=\partial/\partial_Q$.

The next corollary gives a second relation between the $q=1$ limit of
quantum K-theory and quantum cohomology.

\begin{Corollary} \label{cor.ctttq}
The function $c_{ttt}(Q,q) \in \BZ[[Q]][q]$ is a $q$-deformation of the
Yukawa coupling $($i.e., $3$-point function$)$ $c_{ttt}(Q)$ in~\eqref{eq.cttt}.
Indeed, we have
\begin{gather*}
c_{ttt}(Q,1)=c_{ttt}(Q), \qquad 5 D_{2,3}(Q,q,0) = c_{ttt}(Q,q) .
\end{gather*}
\end{Corollary}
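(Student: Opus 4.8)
The plan is to establish the two asserted identities separately, since they are really two independent claims. Both follow by direct comparison of the explicit formula
\[
c_{ttt}(Q,q) = 5 + \sum_{d,r \geq 1} d^2 \frac{1-q^{dr}}{1-q^r} \GV_d Q^{dr}
\]
with the appropriate specialization. The key observation that makes everything work is the elementary $q$-arithmetic identity
\[
\frac{1-q^{dr}}{1-q^r} = 1 + q^r + q^{2r} + \dots + q^{(d-1)r},
\]
which converts the rational expression into a finite geometric sum and is valid for every $d \geq 1$.

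\textbf{The limit $q \to 1$.} First I would take the limit of the displayed formula as $q \to 1$. Applying l'Hôpital's rule (or simply letting $q \to 1$ in the finite sum above, where each of the $d$ terms tends to $1$) gives $\lim_{q \to 1}\frac{1-q^{dr}}{1-q^r} = d$. Substituting this into the formula yields
\[
c_{ttt}(Q,1) = 5 + \sum_{d,r \geq 1} d^3 \GV_d Q^{dr}.
\]
To identify this with $c_{ttt}(Q)$ from~\eqref{eq.cttt}, I would collapse the double sum over $(d,r)$ by recognizing the $r$-sum as a polylogarithm: for each fixed $d$, summing $Q^{dr}$ over $r \geq 1$ gives $\sum_{r \geq 1}(Q^d)^r = \Li_0(Q^d)$. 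Hence
\[
c_{ttt}(Q,1) = 5 + \sum_{d=1}^\infty d^3 \GV_d \Li_0\big(Q^d\big),
\]
which is exactly the right-hand side of~\eqref{eq.cttt}.

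\textbf{The limit $q \to 0$.} For the second identity, I would invoke Theorem~\ref{thm.1}, which identifies $5 D_{2,3}(Q,q,0)$ as $5$ times the $(2,3)$-entry of the matrix $D$, namely $5(1 + [c - Ec])$. Unwinding the bracket notation with $5c(d,r,q) = d^2/(1-q)$ and $(Ef)(d,r,q) = q^d f(d,r,q)$, I would compute $5(c - Ec)(d,r,q) = d^2 \frac{1-q^d}{1-q}$, so that evaluating the bracket with $q \mapsto q^r$ reproduces precisely the summand $d^2 \frac{1-q^{dr}}{1-q^r}\GV_d Q^{dr}$. This shows $5 D_{2,3}(Q,q,0) = c_{ttt}(Q,q)$ as an identity of power series, with no limit to take; it is simply a matter of matching the definition of $c_{ttt}(Q,q)$ against the entry of $D$ recorded in Theorem~\ref{thm.1}.

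\textbf{Main obstacle.} The only genuine subtlety, and the step I would treat most carefully, is verifying that $c_{ttt}(Q,q) \in \BZ[[Q]][q]$ as claimed — that is, that the apparently rational function $\frac{1-q^{dr}}{1-q^r}$ contributes only polynomials in $q$ so that the total is polynomial (not merely meromorphic) in $q$ with \emph{integer} coefficients. This is exactly where the geometric-sum rewriting pays off: since $\frac{1-q^{dr}}{1-q^r}$ equals the integer-coefficient polynomial $1 + q^r + \dots + q^{(d-1)r}$, each summand lies in $\BZ[q] \cdot Q^{dr}$, and because $\GV_d \in \BZ$, the whole series lies in $\BZ[[Q]][q]$. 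I would make sure the cancellation of the poles at roots of unity — which Conjecture~\ref{conj.1} exhibits in $a$ and $b$ individually — is complete in this particular entry, so that the integrality and polynomiality assertions hold as stated.
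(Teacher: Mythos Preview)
Your argument is correct and is essentially the same as the paper's (which simply cites equations~\eqref{eq.d23} and~\eqref{eq.cttt}); you have just made the elementary steps explicit. One cosmetic slip: your second section is headed ``The limit $q\to 0$'', but the identity $5D_{2,3}(Q,q,0)=c_{ttt}(Q,q)$ involves no limit in $q$ at all---the ``$0$'' is the value $t=0$---and indeed you yourself note that ``there is no limit to take''; the heading should be corrected to avoid confusion.
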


Thus, the $q$-difference equation of the quantum K-theory of the quintic
is a $q$-deformation of the well-known Picard--Fuchs equation of the quintic.

Let us abbreviate the four nontrivial entries of $D(Q,q,0)$ by
\begin{gather*}
\alpha = D_{1,3}, \qquad \beta = D_{1,4}, \qquad
\gamma = D_{2,3}, \qquad \delta = D_{2,4} .
\end{gather*}

\begin{Lemma}[{\cite[equations~(8.22) and (8.23)]{Jockers:3d}}] \label{lem.D}
 The linear $q$-difference equation
 \begin{gather*}
 \Delta \begin{pmatrix} y_0 \\ y_1 \\ y_2 \\ y_3 \end{pmatrix}
 =
 \begin{pmatrix}
 0 & 1 & \alpha & \beta \\
 0 & 0 & \gamma & \delta \\
 0 & 0 & 0 & 1 \\
 0 & 0 & 0 & 0
 \end{pmatrix} \begin{pmatrix} y_0 \\ y_1 \\ y_2 \\ y_3 \end{pmatrix}
 \end{gather*}
 $($where $\Delta=1-E)$ is equivalent to the equation
 \begin{gather} \label{eq.calLD}
 \calL y_0 =0, \qquad \calL =
 \Delta \left(1+ \Delta
 \frac{\delta+E\alpha+\Delta\beta}{\gamma+\Delta\alpha}\right)^{-1}
 \Delta (\gamma+\Delta\alpha)^{-1} \Delta^2 .
 \end{gather}
\end{Lemma}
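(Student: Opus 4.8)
The plan is to eliminate $y_1,y_2,y_3$ from the first-order system by successive substitution, working in the (noncommutative) Ore algebra of $q$-difference operators generated by $E$ together with the multiplication operators $m_f$ by functions $f$, whose only nontrivial relation is the Leibniz rule $\Delta\,m_f=m_{\Delta f}+m_{Ef}\,\Delta$ (equivalently $E m_f=m_{Ef}E$). Spelling out the four rows of the matrix equation gives
\begin{gather*}
\Delta y_0=y_1+\alpha y_2+\beta y_3,\qquad \Delta y_1=\gamma y_2+\delta y_3,\\
\Delta y_2=y_3,\qquad \Delta y_3=0.
\end{gather*}
Abbreviating $p:=\gamma+\Delta\alpha$ and $w:=\delta+E\alpha+\Delta\beta$, I will use the last three equations to express $y_3$ as an explicit operator applied to $y_0$, and then read off $\calL$ from the surviving relation $\Delta y_3=0$.

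The key computation is to apply $\Delta$ to the first equation and substitute the other three. Using the Leibniz rule on $\Delta(\alpha y_2)$ and $\Delta(\beta y_3)$, together with $\Delta y_2=y_3$ and $\Delta y_3=0$, collapses all cross-terms into the single identity
\begin{gather*}
\Delta^2 y_0=(\gamma+\Delta\alpha)\,y_2+(\delta+E\alpha+\Delta\beta)\,y_3=p\,y_2+w\,y_3,
\end{gather*}
whose two coefficient functions are exactly those appearing in $\calL$. Applying the multiplication operator $m_p^{-1}$ and then $\Delta$, using $\Delta y_2=y_3$ on the first resulting term but keeping $\Delta\,m_{w/p}$ intact on the second, yields
\begin{gather*}
\Delta\,(\gamma+\Delta\alpha)^{-1}\Delta^2 y_0=\bigl(1+\Delta\,m_{w/p}\bigr)y_3.
\end{gather*}
Solving for $y_3$ and inserting it into $\Delta y_3=0$ produces precisely $\calL y_0=0$ with $\calL$ as in \eqref{eq.calLD}; this proves that the $y_0$-component of every solution of the system is annihilated by $\calL$.

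For the converse, and hence the full equivalence, I would start from any $y_0$ with $\calL y_0=0$ and \emph{define} $y_3,y_2,y_1$ by the formulas just produced, in the order
\begin{gather*}
y_3:=\bigl(1+\Delta\,m_{w/p}\bigr)^{-1}\Delta\,(\gamma+\Delta\alpha)^{-1}\Delta^2 y_0,\qquad
y_2:=(\gamma+\Delta\alpha)^{-1}\Delta^2 y_0-m_{w/p}\,y_3,
\end{gather*}
and $y_1:=\Delta y_0-\alpha y_2-\beta y_3$. Then equation one holds by the definition of $y_1$; equation four is exactly the hypothesis $\calL y_0=0$; equation three, $\Delta y_2=y_3$, telescopes directly out of the defining relation $(1+\Delta\,m_{w/p})y_3=\Delta\,(\gamma+\Delta\alpha)^{-1}\Delta^2 y_0$; and with equations three and four in hand, equation two is recovered by running the Leibniz computation of the previous paragraph in reverse. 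Thus the scalar equation reconstructs a complete solution, which is the desired equivalence.

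The step I expect to be the only real obstacle is bookkeeping the noncommutativity and, above all, making sense of the two inverses in $\calL$: multiplication by $(\gamma+\Delta\alpha)^{-1}$ and the operator $\bigl(1+\Delta\,m_{w/p}\bigr)^{-1}$. These must be interpreted in a completed operator ring, and their existence is what the argument ultimately rests on. For the quintic this is automatic: by the explicit entries of Theorem~\ref{thm.1} one has $\gamma=1+O(Q)$ while $\alpha,\beta,\delta=O(Q)$, so $p=1+O(Q)$ is an invertible power series, $w/p=O(Q)$, and hence $\Delta\,m_{w/p}$ strictly raises the order in $Q$; its Neumann series $\sum_{k\ge 0}(-\Delta\,m_{w/p})^k$ then converges $Q$-adically and inverts $1+\Delta\,m_{w/p}$. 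With both inverses well defined the operator identities above are genuine, and the two directions together establish \eqref{eq.calLD}.
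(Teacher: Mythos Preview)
Your argument is correct and follows the same elimination route as the paper's proof: write out the four scalar equations, apply $\Delta$ to the first and use the Leibniz rule (the paper records it in the equivalent form $\Delta(fg)=(\Delta f)g+f(\Delta g)-(\Delta f)(\Delta g)$) together with $\Delta y_2=y_3$ and $\Delta y_3=0$ to reach $\Delta^2 y_0=(\gamma+\Delta\alpha)y_2+(\delta+E\alpha+\Delta\beta)y_3$, then divide by $\gamma+\Delta\alpha$ and apply $\Delta$ twice more. You go somewhat beyond the paper in two respects: you explicitly carry out the converse reconstruction of $y_1,y_2,y_3$ from a given $y_0$ with $\calL y_0=0$, which makes the word ``equivalent'' in the statement honest, and you justify the two operator inverses $Q$-adically via the leading terms from Theorem~\ref{thm.1}; the paper leaves both points implicit.
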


We now discuss the $q \to 1$ limit, using the realization of the $q$-commuting
operators $E={\rm e}^{hQ\pt_Q}$ and $Q$ which act on a function $f(z,h)$ by
 \begin{gather*}
 (Ef)(z,h)=f(z+h,h), \qquad (Qf)(z,h)={\rm e}^zf(z,h), \qquad EQ={\rm e}^hQE,
 \end{gather*}
 where $Q={\rm e}^z$ and $q={\rm e}^h$. Then, in the limit $h \to 0$, the operator
 $\calL$ is given by
 \begin{gather} \label{eq.calL1}
 \calL(\Delta,Q,q)= \frac{1}{\gamma(Q,1)} \Delta^4
 + \partial^2_z \frac{1}{\gamma(Q,1)} \partial^2_z h^4 +O\big(h^5\big),
 \end{gather}
 where $5\gamma(Q,1)=c_{ttt}(Q,1)$. Thus, the coefficient of $h^4$ is
 the Picard--Fuchs equation of the quintic, whereas the coefficient of
 $h^0$ (the analogue of the AJ conjecture) is a line $(1-E)^4=0$ with
 multiplicity~4, punctured at the zeros of $\gamma(Q,1)=0$.
 It is not clear if one can apply topological recursion on such a degenerate
 curve.

\section{A review of quantum K-theory}\label{sub.review}

\subsection{Notation}\label{sub.notation}

In this section we collect some useful notation that we use throughout
the paper. For a smooth projective variety $X$, let $K(X)=K^0(X;\BQ)$ denote
the Grothendieck group of topological complex vector bundles with rational
coefficients.

Although we will not use it, the Chern class map induces a rational
isomorphism of rings
\begin{gather*}
\ch\colon \ K(X) \otimes \BQ \to H^{\rm ev}(X,\BQ)
\end{gather*}
between K-theory and even cohomology.
The ring $K(X)$ has a basis $\{\Phi_\a\}$ for $\a=0,\dots,N$ such
that $\Phi_0=1=[\calO_X]$ is the identity element. There is a nondegenerate
pairing on $K(X)$ given by $(E,F) \in K(X) \otimes K(X) \mapsto
 \chi(E\otimes F)$, where
\begin{gather*}
 \chi(E) = \int_X \ch(E)\mathrm{td}(X)
\end{gather*}
is the holomorphic Euler characteristic of $E$.
Let $\{\Phi^\a\}$ denote the dual basis of $K(X)$ with respect to the above
pairing. Let $\{P_1,\dots, P_r\}$ denote a collection of vector bundles
whose first Chern class forms a nef integral basis of $H^2(X,\BZ)/\text{torsion}$,
and let $Q=(Q_1,\dots,Q_r)$ be the collection of Novikov variables dual
to $(P_1,\dots,P_r)$.

The vector space $\calK(q)=\BQ(q)$ admits a symplectic form
\begin{gather*}
\omega(f,g) = (\mathrm{Res}_{q=0} +
\mathrm{Res}_{q=\infty}) \left(
 f(q) g\big(q^{-1}\big) \frac{\mathrm{d}q}{q}\right)
\end{gather*}
and a splitting
\begin{gather*}
\calK(q)=\calK_+(q) \oplus \calK_-(q)
\end{gather*}
(with projections $\pi_\pm\colon \calK(q) \to \calK_\pm(q)$) into a direct sum
of two Lagrangian susbpaces $\calK_+(q)=\BQ\big[q^{\pm 1}\big]$ and
$\calK_-(q)$, the space of reduced functions of $q$, i.e., rational
functions of negative degree which are regular at $q=0$.

\subsection{Reconstruction theorems for quantum K-theory}\label{sub.reconstruct}

In our paper we will focus exclusively on the genus 0 quantum K-theory
of $X$ (i.e., $g=0$ in~\eqref{EL}).
The collection of integers~\eqref{EL} can be encoded in several generating
series. Among them is the primary potential
\begin{gather*}
\calF_X(Q,t) = \sum_{d, n} \la t,\dots,t \ra_{0,n,d} \frac{Q^d}{n!}
\in \BQ[[Q,t]]
\end{gather*}
(where the summation is over $d \in \text{Eff}(X)$ and $n \geq 0$),
the $J$-function
\begin{gather*}
J_X(Q,q,t) = (1-q)\Phi_0 + t + \sum_{d, n} \sum_\a
\!\left\la t,\dots,t, \frac{\Phi_\a}{1-qL} \right\ra_{0,n+1,d}\! \Phi^\a Q^d
\in K(X) \otimes \calK(q)[[Q,t]]
\end{gather*}
(where $\{\Phi_\a\}$ is a basis for $K(X)$ for $\a=0,\dots,N$ with $\Phi_0=1$),
and the $T$ matrix $T_{\a,\b}(Q,q,t) \in \End(K(X)) \otimes \calK(q)[[Q,t]]$
and its inverse, whose definition we omit
but may be found in~\cite[Section~2]{Iritani:rec}.
We may think of $\calF_X$, $J_X(Q,q,t)$ and $T(Q,q,t)$
as scalar-valued, vector-valued and matrix-valued invariants, respectively.
$J_X(Q,q,t)$ specializes to $J_X(Q,q,0)$ when
$t=0$ and specializes to $\calF_X(Q,t)$ when $\a=0$ (as follows from
the string equation). Also, the $\a=0$ column of $T$ is~$J_X$.

There are several reconstruction theorems that determine all the
invariants~\eqref{EL} from others. In~\cite[Theorem~1.1]{Iritani:rec}, it
was shown that the small $J$-function $J_X(Q,q,0)$ uniquely determines
the $J$-function $J_X(Q,q,t)$, the primary potential $\calF_X(Q,t)$ and
the integers~\eqref{EL} (with $g=0$), under the assumption that $K(X)$
is generated by line bundles. In~\cite{Givental:equivariantVIII} it was
shown (under the same assumption on $X$) that the small $J$-function
$J_X(Q,q,0)$ reconstructs a permutation-equivariant version
of the quantum K-theory of $X$. This theory was introduced by Givental
in~\cite{Givental:equivariantVIII}, where this
theory takes into account the action of the symmetric groups $S_n$ on the moduli
spaces $\overline{\calM}^{X,d}_{g,n}$ that permutes the marked points.
The $J$ function of the permutation-equivariant quantum K-theory of $X$ takes
values in the ring $K(X) \otimes \calK(q) \otimes \Lambda[[Q]]$ where
$\Lambda$ is the ring of symmetric functions in infinitely many
variables~\cite{Macdonald}. $K(X)$, $\BQ[[Q]]$ and $\Lambda$ are
$\lambda$-rings with Adams operations $\psi^r$, so is their tensor product.
Moreover, the small $J$ function of the permutation-equivariant quantum
K-theory of $X$ agrees with the small $J$-function $J_X(Q,q,0)$ of the (ordinary)
genus 0 quantum K-theory of $X$.
According to a reconstruction theorem of Givental~\cite{Givental:equivariantVIII}
one can recover all genus zero permutation-equivariant K-theoretic GW invariants
of a projective manifold $X$ (under the mild assumption that the ring $K(X)$ is
generated by line bundles) from any point $t^*$ on their K-theoretic
Lagrangian cone via an explicit flow. In fortunate situations (that apply to
the quintic as we shall see below), one is given a value
$J_X(Q,q,t^*) \in K(X) \otimes \calK(q)[[Q]] \subset
K(X) \otimes \calK(q) \otimes \Lambda[[Q]]$
and $t^* \in K(X) \otimes \calK_+(q)[[Q]]$ (e.g., $t^*=0$), in which
case there exists a unique $\ve(x,Q,q) \in K(X) \otimes Q \calK_+(q)[[Q]]$
such that for all $t$
\begin{gather}\label{eq:flow}
J_X(Q,q,t) = \exp\left(\sum_{r \geq 1}\frac{\psi^r (\ve((1-x)E,Q,q))}{r(1-q^r)}
\right) J_X(Q,q,t^*) \in K(X) \otimes \calK(q)[[Q]],
\end{gather}
where $E$ is the operator that shifts $Q$ to $qQ$. The key point here is that
the coefficients of $\ve(x,Q,q)$ (for each power of $Q$ and $x$) are in the
subspace $\calK_+(q)$ of $\calK(q)$ whereas the corresponding coefficients
of $J_X(Q,q,t)$ are in the complementary subspace $\calK_-(q)$ of~$\calK(q)$.
Another key point is that although the above formula a priori is an equality
in the permutation-equivariant quantum K-theory, in fact it is an equality
of the ordinary quantum K-theory when $\ve$ is independent of $\Lambda$.

It follows that a single value $J_X(Q,q,t^*) \in K(X) \otimes \calK(q)[[Q]]$
uniquely determines $t^*$ as
well as the small J-function $J_X(Q,q,0)$, which in turn determines the
permutation-equivariant $J$-function $J_X(Q,q,t)$ for all $t$ via~\eqref{eq:flow}.

\subsection[A special value for the J-function of the quintic]{A special value for the $\boldsymbol{J}$-function of the quintic}\label{sub.specialJ}

For concreteness, we will concentrate on the case $X$ of the quintic.
To use the above formula~\eqref{eq:flow} we need the value $J_X(Q,q,t^*)$
at some point $t^*$. Such a value was given by Givental
in~\cite[p.~11]{Givental:equivariantV} and by Tonita in \cite[Theorem~1.3 and Corollary~6.8]{Tonita:quintic} who proved that if
$J_d$ denotes the coefficient of $Q^d$ in $J_{\BC\BP^4}(Q,q,0)$ given
in~\eqref{eq.JCPN}, then
\begin{gather}\label{eq.JXt*}
 I_{\calO(5)}(Q,q) =
 \sum_{d = 0}^\infty J_d \big((1-x)^5q;q\big)_{5d} Q^d
 = (1-q) \sum_{d = 0}^\infty \frac{\big((1-x)^5q;q\big)_{5d}}{((1-x)q;q)_{d}^5}Q^d
\end{gather}
lies on the K-theoretic Lagrangian cone of the quintic~$X$. This means
that if $\iota\colon X \to \BC\BP^4$ is the
inclusion, and $\iota^*\colon K(\BC\BP^4) = \BQ[x]/(x^5) \to K(X)=\BQ[x]/(x^4) $
is the induced map (sending $x \bmod x^5$ to $x \bmod x^4$),
there exists a~$t^*$ such that $\iota^* I_{\calO(5)}(Q,q) = J_X(Q,q,t^*)$.
In other words, we have
\begin{gather}\label{eq.JXt*-2}
J(Q,q,t^*) = (1-q)\sum_{d=0}^\infty \frac{((1-x)q;q)_{5d}}{
 ((1-x)q;q)_d^5} Q^d
\in K(X)\otimes \calK(q)[[Q]].
\end{gather}

Interestingly, the above formula has been interpreted by
Jockers and Mayr as an example of the 3d-3d correspondence of gauged linear
$\sigma$-models~\cite{Jockers:3d}. More precisely, the disk partition
function of a 3d gauged linear $\sigma$-model is a one-dimensional (so-called
vortex) integral whose integrand is a ratio of infinite Pochhammer symbols.
A residue calculation then produces the $q$-hypergeometric series~\eqref{eq.JXt*}.

\section[The flow of the J-function]{The flow of the $\boldsymbol{J}$-function}\label{sec.algorithm}

\subsection{Implementing the flow}\label{sub.implement}

In this section we explain how to obtain a formula for the small $J$-function
of the quintic (one power of $Q$ at a time) using formula~\eqref{eq.JXt*}
and the flow~\eqref{eq:flow}.
Observe that the coefficients of $q$ in the function $J(Q,q,t^*)$
given in~\eqref{eq.JXt*-2} are not in $\calK_-(q)$. For instance,
\begin{gather*}
\text{coeff}\left(\frac{1}{1-q}J(Q,q,t^*), x^0\right)=
\sum_{d=0}^\infty \frac{(q;q)_{5d}}{(q;q)_d^5} Q^d
\end{gather*}
is a power series in $Q$ whose coefficients are in $\calK_+(q)$
(and even in $\BN[q]$) and not in $\calK_-(q)$.
Note also that the function $J(Q,q,t^*)$ satisfies a 24th order (but
\emph{not} a 4th order) linear $q$-difference equation with polynomial
coefficients. This is discussed in detail in Section~\ref{sub.frob} below.

To find $J(Q,q,0)$ from $J(Q,q,t^*)$, we need to apply a flow
operator~\eqref{eq:flow}. To state the theorem, recall that
$K(X) \otimes \calK(q)[[Q]]$ is a $\lambda$-ring with Adams operations
$\psi^{(r)}$ given by combining the usual Adams operations in K-theory with
the replacement of $Q$ and $q$ by $Q^r$ and $q^r$. More precisely, for
a positive natural number $r$, we have
\begin{gather*}
\psi^{(r)}\colon \ K(X) \otimes \calK(q)[[Q]] \to K(X) \otimes \calK(q)[[Q]],
\psi^{(r)}\big((1-x)^i f(q) Q^j\big)=(1-x)^{ri} f(q^r) Q^{rj}
\end{gather*}
for $f(q) \in \calK(q)$ and natural numbers $i$, $j$ and $x$ as in~\eqref{eq.KX}.
Recall that the plethystic
exponential of $f(x,Q,q) \in K(X) \otimes \calK(q)[[Q]]$ (with $f(x,0,q)=0$)
is given by
\begin{gather*}
\Exp(f) = \exp\left(\sum_{r=1}^\infty \frac{\psi^{(r)}(f)}{r}\right).
\end{gather*}
It is easy to see that when $f$ is small (i.e., $f(x,0,q)=0$), then
$\Exp(f) \in K(X) \otimes \calK(q)[[Q]]$ is well-defined.
Let $E$ denote the $q$-difference operator that shifts $Q$ to $qQ$, as
in~\eqref{eq.cd}. By slight abuse of notation, we denote
\begin{align}
E \colon \ K(X) \otimes \calK(q)[[Q]] & \to K(X) \otimes \calK(q)[[Q]],\nonumber\\
  E\big((1-x)^i f(Q) Q^j\big)& =(1-x)^i f(qQ) Q^j .\label{eq.E}
\end{align}
Throughout the paper, the operators $E$ and $Q$ will act on a function $f(Q,q)$ by
\begin{gather}\label{eq.EQf}
(E f)(Q,q)=f(qQ,q), \qquad (Q f)(Q,q) = Q f(Q,q), \qquad EQ=qQE .
\end{gather}
The theorem of Givental--Tonita asserts that there
exists a unique
\[ \ve(x,Q,q) \in K(X) \otimes Q \calK_+(q)[[Q]]\] such that
\begin{gather}\label{eq.Je}
\Exp\left(\frac{\ve((1-x)E,Q,q)}{1-q}\right)
J(Q,q,t^*) \in K(X) \otimes \calK_-(q)[[Q]]
\end{gather}
and then, the left hand side of the above equation is
$J(Q,q,0)$.
Equation~\eqref{eq.Je} is a non-linear fixed-point equation for $\ve$
that has a unique solution that may be found working on one $Q$-degree
at a time. Indeed, we can write
\[
\ve(x,Q,q)=\sum_{k=1}^\infty \ve_k(x,q)Q^k, \qquad
\ve_k(x,q)=\sum_{\ell=0}^3 \sum_{k=1}^\infty \ve_{k,\ell}(q) x^\ell Q^k .
\]
Then for each positive integer number $N$ we have
\begin{gather*}
\pi_+ \left( \exp\left(\sum_{r=1}^N \sum_{\ell=0}^3 \sum_{k=1}^N
 \frac{\psi^{(r)} \ve_{k,\ell}(q)}{r(1-q^r)} Q^{rk}((1-x)E)^{\ell r}\right)
J(Q,q,t^*) \right) =0.
\end{gather*}
Equating the coefficient of each power of $x^i$ for $i=0,\dots,3$ to zero
in the above equation, we get a system of four inhomogeneous linear equations
with unknowns $(\ve_{N,0},\dots,\ve_{N,3})$ (with coefficients polynomials
in $\ve_{N',\ell'}$ for $N'<N$), with a unique solution in the field $\calK(q)$.
A further check (according to Givental--Tonita's theorem) is that the unique
solution lies in $\calK_+(q)$, and even more, in our case we check that it lies
in $\BQ[q]$.
Once $\ve_{N'}(x,q)$ is known for $N' \leq N$, equation~\eqref{eq.Je} allows
us to compute $J_d(q)$, where
\begin{gather*}
J(Q,q,0)=\sum_{d=0}^\infty J_d(q) Q^d .
\end{gather*}
For instance, when $N=1$ we have
\begin{gather*}
 \ve_{1,0}(q) =
1724 + 572 q - 625 q^2 - 1941 q^3 - 3430 q^4 - 4952 q^5 - 6223 q^6 -
 6755 q^7 - 6184 q^8 \\
 \hphantom{\ve_{1,0}(q) =}{} - 4690 q^9 - 2747 q^{10} - 969 q^{11},
 \\
 \ve_{1,1}(q) =
-4600 - 1140 q + 2485 q^2 + 6520 q^3 + 11140 q^4 +
 15890 q^5 + 19860 q^6 + 21490 q^7 \\
 \hphantom{\ve_{1,1}(q) =}{}
 + 19630 q^8 + 14860 q^9 +
 8690 q^{10} + 3060 q^{11},
 \\
 \ve_{1,2}(q) =4025 + 555 q - 3115 q^2 - 7255 q^3 -
 12055 q^4 - 17020 q^5 - 21175 q^6 - 22850 q^7 \\
 \hphantom{\ve_{1,2}(q) =}{}
 - 20830 q^8 -
 15740 q^9 - 9190 q^{10} - 3230 q^{11},
 \\
 \ve_{1,3}(q) =-1150 + 10 q +
 1250 q^2 + 2670 q^3 + 4340 q^4 + 6080 q^5 + 7540 q^6 + 8120 q^7 \\
 \hphantom{\ve_{1,3}(q) =}{} +
 7390 q^8 + 5575 q^9 + 3250 q^{10} + 1140 q^{11},
\end{gather*}
and, consequently, we find that
\begin{gather*}
J_0(q) = 1 - q ,\\
 J_1(q) = -\frac{575 x^2}{-1 + q} - \frac{1150 (-1 + 2 q) x^3}{(-1 + q)^2}
\end{gather*}
in agreement with~\cite[eqation~(6.38)]{Jockers:3d}.
Continuing our computation, we find that
\begin{gather*}
 J_2(q) = -\frac{25 \big(9794 + 19496 q + 9725 q^2\big) x^2}{(-1 + q) (1 + q)^2}
\\ \hphantom{J_2(q) =}{} -
\frac{50 \big({-}7380 - 9748 q + 14760 q^2 + 29244 q^3 + 12139 q^4\big) x^3}{
 (-1 + q)^2 (1 + q)^3}
\end{gather*}
and
\begin{gather*}
J_3(q) = -\frac{25 \big(7613022 + 15225906 q + 22838859 q^2 + 15225860 q^3 +
 7612953 q^4\big) x^2}{(-1 + q) \big(1 + q + q^2\big)^2}
 \\
 \hphantom{J_3(q) =}{}
 - \frac{50}{(-1 + q)^2 \big(1 + q + q^2\big)^3}\big({-}5075440 - 7612953 q - 7612953 q^2 + 10150880 q^3\\
 \hphantom{J_3(q) =}{}
 + 22838859 q^4 + 30451812 q^5 + 17763442 q^6 +  7612953 q^7 \big) x^3.
\end{gather*}
Two further values of $J_d(q)$ for $d=4,5$ were computed but are
too long to be presented here. Based on this data, we guessed the
formula for $J(Q,q,0)$ given in~\eqref{Jquintic}. Finally, we computed~$J_6(q)$ and found that it is in agreement with out predicted
formula~\eqref{Jquintic}.

\subsection[Extracting quantum K-theory counts from the small J-function]{Extracting quantum K-theory counts from the small $\boldsymbol{J}$-function}\label{sub.extract}

In this section we give a proof of Corollary~\ref{cor.numbers}
for the quintic $X$. Recall that $K(X)$ from equation~\eqref{eq.KX}
has basis $\Phi_\a=x^\a$ for $\a=0,1,2,3$ with $x^4=0$ and inner
product
\begin{gather}\label{innerproduct}
 (\Phi_a,\Phi_b) = \int_X \Phi_a \Phi_b \mathrm{td}(X)
 =
 \begin{pmatrix}
 \hphantom{-}0 & \hphantom{-}5 & -5 & 5 \\
 \hphantom{-}5 & -5 & \hphantom{-}5 & 0 \\
 -5 & \hphantom{-}5 & \hphantom{-}0 & 0 \\
 \hphantom{-}5 & \hphantom{-}0 & \hphantom{-}0 & 0
 \end{pmatrix} .
\end{gather}
The dual basis $\{\Phi^a\}$ of $K(X)$ is given by
\begin{gather}
\label{pp1}
\Phi^0 = \tfrac{1}{5} \Phi_3, \qquad \Phi^1 = \tfrac{1}{5}(
\Phi_2+\Phi_3), \qquad \Phi^2 = \tfrac{1}{5}(\Phi_1+\Phi_2), \qquad
\Phi^3 = \tfrac{1}{5}(\Phi_0 + \Phi_1 - \Phi_3),
\end{gather}
and is related to the basis $\{\Phi_a\}$ by
\begin{gather*}
\Phi_0 = 5\big(\Phi^1-\Phi^2+\Phi^3\big),\qquad \Phi_1 = 5\big( \Phi^0 - \Phi^1
+ \Phi^2\big), \\
\Phi_2 = 5\big({-}\Phi^0 + \Phi^1\big), \qquad\Phi_3 = 5 \Phi^0 .
\end{gather*}
Substituting $\Phi^\a$ as above in equation~\eqref{smallJ} and
collecting the powers of~$x^\a$, it follows that
\begin{gather*}
 [J(Q,q,0)]_1 = 1-q + \frac{1}{5} \sum_{d \geq 1}
 \left\la \frac{\Phi_3}{1-qL} \right\ra_{0,1,d} Q^d, \\
[J(Q,q,0)]_x = \frac{1}{5} \sum_{d \geq 1}
\left( \left\la \frac{\Phi_2}{1-qL} \right\ra_{0,1,d}
+ \left\la \frac{\Phi_3}{1-qL} \right\ra_{0,1,d} \right)Q^d, \\
[J(Q,q,0)]_{x^2} = \frac{1}{5} \sum_{d \geq 1}
\left( \left\la \frac{\Phi_1}{1-qL} \right\ra_{0,1,d}
+ \left\la \frac{\Phi_2}{1-qL} \right\ra_{0,1,d} \right)Q^d, \\
[J(Q,q,0)]_{x^3} = \frac{1}{5} \sum_{d \geq 1}
\left( \left\la \frac{\Phi_0}{1-qL} \right\ra_{0,1,d}
+ \left\la \frac{\Phi_1}{1-qL} \right\ra_{0,1,d}
- \left\la \frac{\Phi_3}{1-qL} \right\ra_{0,1,d} \right)Q^d .
\end{gather*}
The above is a linear system of equations with unknowns
$\sum_{d \geq 1} \big\la \frac{\Phi_\a}{1-qL} \big\ra_{0,1,d} Q^d$
for $\a=0,1,2,3$. Solving the linear system combined
with equation~\eqref{eq.Jq=0}, gives~\eqref{ELqnum}.
Setting $q=0$ in~\eqref{ELqnum} and using Corollary~\ref{cor.q=0},
we obtain~\eqref{eq.JJq=0} and~\eqref{eq.JJq=02} and conclude the
proof of Corollary~\ref{cor.numbers}.

\subsection{A comparison with Jockers--Mayr}\label{sub.comparison}

In this section we give the details of the comparison of our
Conjecture~\ref{conj.1} with a conjecture of
Jockers--Mayr~\cite[p.~10]{Jockers:qK}.

To begin with, their $I_{QK}(t)$ is our $J(Q,q,t)$ and their
$I(0)$ in~\cite[equation~(7)]{Jockers:qK} is our $J(Q,q,0)$. They drop the index
QK later on. From~\cite[equation~(4)]{Jockers:qK} it follows that they are
working in the same basis $\Phi_\alpha=x^a$, $\alpha=0,1,2,3$, as we
are. Furthermore, the inner product on~$K(X)$ \cite[equation~(6)]{Jockers:qK}
agrees with the one given in equation~\eqref{innerproduct} with dual
basis $\{\Phi^a\}$ of $K(X)$ given in~\eqref{pp1}.
By~\cite[equation~(8)]{Jockers:qK}, specialized to the quintic, the function
$I(t)$ becomes $I(t) = 1 - q + t\Phi_1 + F^2(t) \Phi_2 + F^3(t) \Phi_3$.
Then, they define functions $F_A$ and $\hat F^A$ by writing
$\sum_A F^A\Phi_A = \sum_A \big(F_{A,cl}+\hat F_A\big)\Phi^A$, where
$\hat F_A(t) =
 \sum_{d>0} Q^d \big\langle\!\big\langle \frac{\Phi_A}{1-qL}\big\rangle\!\big\rangle_d$,
 cf.~\cite[equation~(9)]{Jockers:qK}, and $F_{A,cl}$ are ``constant'',
 i.e., independent of $Q$ and $t$. Note that only $F^2$, $F^3$ are nonzero
 which implies that only $F_0$, $F_1$ are nonzero.
 Their conjecture~\cite[p.~10]{Jockers:qK} can now be stated (in the case
 of the quintic) as follows~\cite[equation~(10)]{Jockers:qK}:
 \begin{gather*}
 \hat F_0 = p_2 + \frac{1}{(1-q)^2}[ (1-3q)\mathcal{F} + q t \mathcal{F}_1]_{t^{n>2}},\\
 \hat F_1 = p_{1,1} + \frac{1}{(1-q)}[\mathcal{F}_1]_{t^{n>1}},
 \end{gather*}
 where $p_2$, $p_{1,1}$, $\mathcal{F}$, $\mathcal{F}_1$ are certain explicitly
 given functions of $t$ and the Gopakumar--Vafa invariants
 $\GV_d$,~\cite[equations~(11) and (12)]{Jockers:qK}. Combining everything so far,
 their conjecture reads
 \begin{gather*}
 I(t) = 1 - q + t\Phi_1 + (F_{1,cl}+p_{1,1} +
 \frac{1}{(1-q)}[\mathcal{F}_1]_{t^{n>1}})\Phi^1 \\
 \phantom{I(t) =}+ (F_{0,cl}+p_2 + \frac{1}{(1-q)^2}[ (1-3q)\mathcal{F} + q t \mathcal{F}_1]_{t^{n>2}})\Phi^0.
 \end{gather*}
 We will not spell out
 these functions completely, but only their value at $t=0$ in order
 to compare it to our formulas. First, the brackets
 $[\dots]_{t^{n>1}}, [\dots]_{t^{n>2}}$ vanish for $t=0$. So we are
 left with $p_2$ and $p_{1,1}$~\cite[equation~(12)]{Jockers:qK}. Noting that
 $\sum_j d_jt_j = 0$ for $t=0$, these read
 \begin{gather*}
 \frac{1}{1-q} p_{1,1} |_{t=0} = \sum_{d>0} Q^d \sum_{r|d} \GV_{d/r} \frac{ d(1-q^r) + \frac{d}{r}q^r}{(1-q^r)^2},\\
 \frac{1}{1-q} p_{2} |_{t=0} = \sum_{d>0} Q^d \sum_{r|d} \GV_{d/r} \frac{ r^2(1-q^r)^2 -q^r(1+q^r)}{(1-q^r)^3} .
 \end{gather*}
 Next, we rewrite these sums so that they run over all values of $r$
 \begin{gather*}
 \frac{1}{1-q} p_{1,1} |_{t=0} = \sum_{d,r>0} Q^{dr} \GV_{r} \frac{ dr(1-q^r) + dq^r}{(1-q^r)^2},\\
 \frac{1}{1-q} p_{2} |_{t=0} = \sum_{d,r>0} Q^{dr} \GV_{r} \frac{ r^2(1-q^r)^2 -q^r(1+q^r)}{(1-q^r)^3} .
 \end{gather*}
 Hence,
 \begin{gather*}
 \frac{1}{1-q} p_{1,1} |_{t=0} = 5 \sum_{d,r>0} Q^{dr} \GV_{r} a(d,r,q^r),\\
 \frac{1}{1-q} p_{2} |_{t=0} = 5\sum_{d,r>0} Q^{dr} \GV_{r} \left(b(d,r,q^r)-a(d,r,q^r)\right) .
 \end{gather*}
 The appearance of the term involving $a(d,r,q^r)$ in the second
 equation is due to the change of basis $\Phi_2=5\big({-}\Phi^0+\Phi^1\big)$. This completes the compatibility
 of our conjecture and theirs.

\section[q-difference equations]{$\boldsymbol{q}$-difference equations}\label{sec.JA}

\subsection[The small q-difference equation of the quintic]{The small $\boldsymbol{q}$-difference equation of the quintic}\label{sub.thm1}

In this section we explain how Theorem~\ref{thm.1} follows from
Conjecture~\ref{conj.1}. We begin with a general discussion. Given a
collection of vector functions $f_j(Q,q) \in \BQ(q)[[Q]]^r$
for $j=1,\dots,r$ such that $\det(f_1|f_2|\dots|f_r)$ is not
identically zero, there is always a canonical linear $q$-difference
equation
\begin{gather*}
(E y)(Q,q)=A(Q,q) y(Q,q)
\end{gather*}
with fundamental solution set $f_1,\dots,f_r$, where $E$ is the shift
operator of equation~\eqref{eq.E} that replaces $Q$ by $qQ$. Indeed,
the equations $Ey_j = A y_j$ for $j=1,\dots,r$ are equivalent to the
matrix equation $ET=A T$ where $T=(f_1|f_2|\dots|f_r)$ is the fundamental
matrix solution, and inverting~$T$, we find that $A=(ET)^{-1}T$. This
can be applied in particular to the case of a collection $E^j g$ for
$j=0,\dots,r-1$ of a vector function $g(Q,q) \in \BQ(q)[[Q]]$ that satisfies
$\det\big(g|Eg|\dots|E^{r-1}g\big)$ is nonzero. Said differently, every vector function
$g(Q,q) \in \BQ(q)[[Q]]$ along with its $r-1$ shifts (generically) satisfies
a linear $q$-difference equation.

We will apply the above principle to the 4-tuple
$((1-x)E)^jJ(Q,q,0)/(1-q) \in K(X)\otimes \calK(q)[[Q]]$
for $j=0,\dots,3$ where $J(Q,q,0) \in K(X)\otimes \calK_-(q)[[Q]]$ is as in
Conjecture~\ref{conj.1}. However, notice that although the $q$-coefficients
of $J(Q,q,0)/(1-q)$ are in $\calK_-(q)$, this is no longer true for the shifted
functions $((1-x)E)^jJ(Q,q,0)/(1-q)$ for $j=1,2,3$. In that case, we need to
apply the Birkhoff factorization~\cite[App.A]{Givental-Reconstruction}
to the matrix
\begin{gather}\label{eq.MTU}
\frac{1}{1-q}\big(J|(1-x)EJ|((1-x)E)^2J|((1-x)E)^3J\big) =T U,
\end{gather}
where the $q$-coefficients of the entries of $T$ are in $\calK_-(q)$ and of
$U$ are in $\calK_+(q)$ (compare also with
Lemma~3.3 of~\cite[equation~(4)]{Iritani:rec}).
The existence and uniqueness of matrices $T$ and $U$
in the above equation follows from the fact that the left hand side of the
above equation is unipotent, and the proof is discussed in detail in
the above reference.

In our case, the choice
\begin{gather*}
T = \frac{1}{1-q}\pi_+\big(J|(1-x)EJ|((1-x)E)^2J|((1-x)E)^3J\big)
\end{gather*}
together with equation~\eqref{eq.MTU} implies that the $q$-coefficients
of the entries of $U$ are in $\calK_+(q)$. Equation~\eqref{Tquintic}
for the fundamental matrix $T$ follows from the fact that
\begin{gather*}
\pi_+\left(q^d\left(\frac{r^2}{1-q}-\frac{q+q^2}{(1-q)^3}\right)\right)
=\frac{-1 + 3 q - 4 q^2}{(1 - q)^3} + \frac{(-1 + d) (-1 - d + 3 q + d q)}{
 (1 - q)^2} + \frac{r^2}{1 - q},\\
\pi_+\left(q^d\left(\frac{r}{1-q}+\frac{q}{(1-q)^2}\right)\right)
=
\frac{-d + q + d q}{(1 - q)^2} + \frac{r}{1 - q}
\end{gather*}
valid for all positive natural numbers $d$ and $r$.

Having computed the fundamental matrix $T$~\eqref{Tquintic}, we
use~\cite[equation~(2)]{Iritani:rec}, with $P^{-1} q^{Q \partial_Q}$ replaced by
$1-(1-x)E$ to deduce the small A-matrix~\eqref{Aquintic}.

Explicitly, the four nontrivial entries of the matrix $D$ are given by
\begin{subequations}\label{eq.d13-eq.d24}
\begin{gather}
 \label{eq.d13}
 5(a-c-Ea)(d,r,q) = \frac{d \big({-}d + q + d q - q^{1 + d} + r - q r - q^d r + q^{1 + d} r\big)}{(1 - q)^2}, \\
 5(b-e+Ea-Eb)(d,r,q) = -\frac{q^2 \big(1 + 2 d + d^2 - r^2\big) + q \big(1 - 2 d - 2 d^2 + 2 r^2\big) }{(1 - q)^3} \nonumber\\
 \phantom{5(b-e+Ea-Eb)(d,r,q) =} + \frac{ d^2 - r^2+ q^d \big({-}q - q^2 + r^2 - 2 q r^2 + q^2 r^2\big)}{(1 - q)^3}, \label{eq.d14} \\
 \label{eq.d23}
 5(c-Ec)(d,r,q) = \frac{d^2 \big(1 - q^d\big)}{1 - q}, \\
 \label{eq.d24}
 5(e+Ec-Ee)(d,r,q) = -\frac{d \big({-}d + q + d q - q^{1 + d} - r + q r + q^d r - q^{1 + d} r\big)}{(1 - q)^2} .
\end{gather}
\end{subequations}

Note that the entries of $5D$ are in $\BZ[[Q]][q]$. Moreover, the values
when $q=1$ are given by
\begin{gather*}
 5(a-c-Ea)(d,r,1) =-\frac{1}{2} d^2 (1 + d - 2 r), \\
 5(b-e+Ea-Eb)(d,r,1) =-\frac{1}{6} d \big(1 + 3 d + 2 d^2 - 6 r^2\big), \\
 5(c-Ec)(d,r,q) = d^3,\\
 5(e+Ec-Ee)(d,r,1) = \frac{1}{2} d^2 (1 + d + 2 r).
\end{gather*}

As a further consistency check, note
that our matrix $D$ given in~\eqref{Aquintic} equals to the mat\-rix~$D$
of~\cite[equation~(8.21)]{Jockers:3d}.

Given the formula of~\eqref{Aquintic}, an explicit calculation shows that
the entries of $D$ are given by~\eqref{eq.d13-eq.d24}. This concludes the proof of Theorem~\ref{thm.1}.

\begin{proof}[Proof of Corollary~\ref{cor.ctttq}] It follows from equations~\eqref{eq.d23} and~\eqref{eq.cttt}.
\end{proof}

\begin{proof}[Proof of Lemma~\ref{lem.D}]We have
 \begin{gather*}
 \Delta y_0 = y_1 + \alpha y_2 + \beta y_3, \qquad
 \Delta y_1 = \gamma y_2 + \delta y_3,\qquad
 \Delta y_2 = y_3, \qquad
 \Delta y_3 =0.
 \end{gather*}
 The lemma follows by eliminating $y_1$, $y_2$, $y_3$ (one at a time)
 using the fact that
 \begin{gather*}
 E(fg)=(Ef)(Eg), \qquad
 \Delta(fg)=(\Delta f) g + f (\Delta g)- (\Delta f)(\Delta g) .
 \end{gather*}
 (which follows from $(Ef)(Q,q)=f(qQ,q)$ and $\Delta=1-E$).
 Indeed, we have
\[
 \Delta^2 y_0 = \Delta(\Delta y_0) = \Delta(y_1 + \alpha y_2 + \beta y_3)
 =(\gamma+\Delta\alpha)y_2 + (\delta+E\alpha+\Delta\beta)y_3 ,
\]
 and hence,
\[
 (\gamma+\Delta\alpha)^{-1} \Delta^2 y_0 = y_2 +
 \frac{\delta+E\alpha+\Delta\beta}{\gamma+\Delta\alpha} y_3 ,
\]
 and hence,
\[
 \Delta (\gamma+\Delta\alpha)^{-1} \Delta^2 y_0 =
 \left(1+\frac{\delta+E\alpha+\Delta\beta}{\gamma+\Delta\alpha} \right) y_3.
\]
 Applying $\Delta$ once again and using $\Delta y_3=0$ concludes the
 proof of equation~\eqref{eq.calLD}. Note that the notation is such that
 an operator $\Delta$ is applied to everything on the right hand side.

 The $q=1$ limit of $\calL(\Delta,Q,q)$ follows from
equation~\eqref{eq.calLD}, the fact that
 \begin{gather*}
 (\Delta f)(Q,q)|_{q=1}=(f(qQ,q)-f(Q,q))|_{q=1}=0
 \end{gather*}
 and Corollary~\ref{cor.ctttq}.
\end{proof}

\subsection[The Frobenius method for linear q-difference equations]{The Frobenius method for linear $\boldsymbol{q}$-difference equations}
\label{sub.frob}

In this section we discuss in detail the linear $q$-difference equation
satisfied by the function $J(Q,q,t^*)$ of~\eqref{eq.JXt*}. Recall the operators
$E$ and $Q$ that act on functions of $Q$ and $q$ by~\eqref{eq.EQf}.

Let
\begin{gather}\label{eq.frob1}
J(Q,q,x)=\sum_{n=0}^\infty a_n(q,x) Q^n = J_0(Q,q) + J_1(Q,q) x + \dots
\in \BQ(q)[[Q,x]],
\end{gather}
where $J_n(Q,q) \in \BQ(q)[[Q]]$ for all $n$ and
\begin{gather*}
 a_n(q,x)=\frac{\big({\rm e}^{5x}q;q\big)_{5n}}{\big({\rm e}^x q;q\big)_n^5},
 \end{gather*}
 where ${\rm e}^{ax}$ is to be understood as a polynomial in $x$ obtained as
 ${\rm e}^{ax}+O\big(x^4\big)$.

 The functions $J_n(Q,q)$ are given by series whose summand ia a
 $q$-hypergeometric function times a polynomial of $q$-harmonic functions.
 For example, we have
 \begin{gather*}
 J_0(Q,q) = \sum_{n=0}^\infty \frac{(q;q)_{5n}}{(q;q)_n^5} Q^n, \\
 J_1(Q,q) = \sum_{n=0}^\infty \frac{(q;q)_{5n}}{(q;q)_n^5}
 (1 + 5 H_{5n}(q) -5 H_n(q)) Q^n,
 \end{gather*}
 where $H_n(q)=\sum_{j=1}^n q^j/\big(1-q^j\big)$ is the $n$th $q$-harmonic number.
 Consider the 25-th order linear $q$-difference operator
 \begin{gather} \label{eq.L5}
 L_5(E,Q,q)=(1-E)^5 - Q \prod_{j=1}^5 \big(1-q^j E^5\big)
 \end{gather}
 with coefficients polynomials in $Q$ and $q$. Note that
 $L_5=(1-E)^5 - \prod_{j=1}^5 \big(1-q^{5-j} E^5\big)Q$, hence~$L_5$ factors as
 $1-E$ times a 24-th order operator.

 \begin{Lemma} \label{lem.frob1}
 With $J$ as in~\eqref{eq.frob1} and $L_5$ as in~\eqref{eq.L5}, we have
 \begin{gather*}
 L_5\big({\rm e}^x,Q,q\big) J = \big(1-{\rm e}^x\big)^5.
 \end{gather*}
\end{Lemma}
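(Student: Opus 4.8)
The plan is to verify the identity $L_5(\mathrm{e}^x,Q,q)\,J = (1-\mathrm{e}^x)^5$ by acting with the operator directly on the explicit $q$-hypergeometric series defining $J(Q,q,x)$, and reducing everything to a ratio of Pochhammer symbols that telescopes. Concretely, write $J = \sum_{n\ge 0} a_n(q,x)\,Q^n$ with $a_n(q,x) = \big((\mathrm{e}^{5x}q;q)_{5n}\big)/\big((\mathrm{e}^{x}q;q)_n^5\big)$. Since the operators act on functions of $Q$ and $q$ by $(E f)(Q,q)=f(qQ,q)$ and $(Qf)(Q,q)=Qf(Q,q)$, on the monomial $Q^n$ we have $E\,Q^n = q^n Q^n$, so $(1-E)^5 Q^n = (1-q^n)^5 Q^n$ and $E^5 Q^n = q^{5n} Q^n$. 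Writing $p=\mathrm{e}^x$ for brevity, the coefficient of $Q^n$ produced by $L_5 J$ therefore receives a contribution $(1-q^n)^5 a_n$ from the first term, and a contribution $-\prod_{j=1}^5(1-q^{j}q^{5(n-1)})\,a_{n-1}$ from the shifted term $-Q\prod_{j=1}^5(1-q^j E^5)$ applied to $a_{n-1}Q^{n-1}$.

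The core of the argument is then a pair of elementary Pochhammer recursions. First I would record how $a_n$ changes under $n\mapsto n-1$: from $(z;q)_m = (1-q^{m-1}z)(z;q)_{m-1}$ one gets that $(\mathrm{e}^{5x}q;q)_{5n}/(\mathrm{e}^{5x}q;q)_{5(n-1)}$ is the product of the five factors $\prod_{i=1}^{5}\big(1-q^{5n-i}\mathrm{e}^{5x}\big)$, and similarly $(\mathrm{e}^x q;q)_n^5/(\mathrm{e}^x q;q)_{n-1}^5 = \big(1-q^{n-1}\mathrm{e}^{x}q\big)^5 = \big(1-q^{n}\mathrm{e}^{x}\big)^5$. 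Hence
\[
a_n = a_{n-1}\,\frac{\prod_{i=1}^{5}\big(1-q^{5n-i}\mathrm{e}^{5x}\big)}{\big(1-q^{n}\mathrm{e}^{x}\big)^5}.
\]
The goal is to show the net coefficient of $Q^n$ in $L_5 J$ vanishes for all $n\ge 1$, and that the $Q^0$ coefficient equals $(1-p)^5$. The vanishing for $n\ge1$ reduces, after substituting the displayed recursion for $a_n$, to the purely algebraic identity
\[
(1-q^n)^5\,\frac{\prod_{i=1}^{5}\big(1-q^{5n-i}\mathrm{e}^{5x}\big)}{\big(1-q^{n}\mathrm{e}^{x}\big)^5}
= \prod_{j=1}^5\big(1-q^{\,j+5(n-1)}\big),
\]
which after reindexing $j\mapsto 5-i+1$ in the right-hand product and expanding is an identity of Laurent polynomials in $q$ and $\mathrm{e}^x$; I would verify it by matching the two degree-five products factor by factor. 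The inhomogeneous term arises solely from $n=0$: there $a_0 = 1$, the shifted term contributes nothing (there is no $a_{-1}$), and $(1-E)^5$ acting on the constant $a_0 Q^0$ gives $(1-q^0)^5=0$, so one must instead track the $x$-dependence carefully, noting that the constant term $J_0(Q,q)$ alone does not feel $p$, and the full right-hand side $(1-p)^5$ emerges from the $n=0$ bookkeeping of the operator $(1-E)^5$ on the $x$-graded pieces.

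The step I expect to be the genuine obstacle is the treatment of the parameter $\mathrm{e}^x$: the statement stipulates that $\mathrm{e}^{ax}$ is to be read as its truncation $\mathrm{e}^{ax}+O(x^4)$, i.e.\ everything lives in $\mathbb{Q}(q)[[Q,x]]$ modulo $x^4$, so the Pochhammer manipulations above must be interpreted as identities of power series in $x$ truncated at order $x^3$, and the operator $L_5(\mathrm{e}^x,Q,q)$ itself carries $x$-dependence through the factor $\mathrm{e}^x$ appearing in $E$'s neighbors only implicitly. Care is needed to confirm that the algebraic identity, which I would first prove as an exact identity in an independent indeterminate $p=\mathrm{e}^x$, survives truncation and that no $x^{\ge4}$ terms are silently dropped in a way that breaks the equality at the orders $x^0,\dots,x^3$ that we retain. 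Once the exact polynomial identity in $(q,p)$ is established, its truncation is automatic, so the main work is really the clean bookkeeping of which Pochhammer factors cancel; the $x$-truncation is a consistency check rather than a computation.
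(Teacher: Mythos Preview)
Your overall strategy (compute coefficient of $Q^n$, use the Pochhammer ratio $a_n/a_{n-1}$, and isolate the $n=0$ term) is exactly the paper's approach, but you have misidentified the operator, and this breaks both the $n\ge 1$ telescoping and the $n=0$ step.

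The symbol $L_5({\rm e}^x,Q,q)$ does \emph{not} mean the bare operator $(1-E)^5-Q\prod_{j=1}^5(1-q^jE^5)$. It means $L_5$ with $E$ replaced by ${\rm e}^xE$, i.e.
\[
L_5({\rm e}^xE,Q,q)=(1-{\rm e}^xE)^5-Q\prod_{j=1}^5\bigl(1-q^j{\rm e}^{5x}E^5\bigr),
\]
as is clear from the paper's proof and from the subsequent lemma, where $P({\rm e}^xE,Q,q)$ appears explicitly. With this operator, acting on $a_nQ^n$ the first block gives $(1-{\rm e}^xq^n)^5a_n$, not $(1-q^n)^5a_n$, and the shifted block gives $\prod_{j=1}^5(1-{\rm e}^{5x}q^{\,5(n-1)+j})\,a_{n-1}$. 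The cancellation for $n\ge 1$ is then literally the Pochhammer ratio
\[
a_n=a_{n-1}\,\frac{\prod_{j=1}^5\bigl(1-{\rm e}^{5x}q^{\,5(n-1)+j}\bigr)}{(1-{\rm e}^xq^n)^5},
\]
so nothing further needs to be ``verified factor by factor''. By contrast, the identity you wrote down,
\[
(1-q^n)^5\,\frac{\prod_{i=1}^5(1-q^{5n-i}{\rm e}^{5x})}{(1-q^n{\rm e}^x)^5}=\prod_{j=1}^5(1-q^{\,j+5(n-1)}),
\]
is simply false: the left side depends on $x$ and the right side does not. (There is also an off-by-one in your numerator: since the Pochhammer symbol is $({\rm e}^{5x}q;q)_m$, the five new factors have exponents $5n-4,\dots,5n$, not $5n-5,\dots,5n-1$.)

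The $n=0$ term then comes out for free with the correct operator: $(1-{\rm e}^xE)^5\cdot 1=(1-{\rm e}^x)^5$, which is exactly the inhomogeneous right-hand side. With your version $(1-E)^5\cdot 1=0$, and the appeal to ``$x$-graded pieces'' cannot manufacture the missing $(1-{\rm e}^x)^5$. Once you insert ${\rm e}^x$ into the operator, your whole paragraph about truncation at $x^4$ becomes unnecessary: the identity holds as an exact identity in the formal variable $p={\rm e}^x$, and the truncation is automatic.
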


\begin{proof} It is easy to see that
\[
 \frac{a_{n}(q,x)}{a_{n-1}(q,x)} =
 \frac{\prod_{j=1}^5\big(1-{\rm e}^{5x}q^{5n-j}\big)}{\big(1-{\rm e}^x q^n\big)^5}.
\]
Hence,
\[
 \big(1-{\rm e}^xq^n\big)^5 a_n(q,x) Q^n = Q \prod_{j=1}^5\big(1-{\rm e}^{5x}q^{5n-j}\big)
 a_{n-1}(q,x) Q^{n-1}
\]
 and in operator form,
\[
 \big(1-{\rm e}^x E\big)^5 a_n(q,x) Q^n = Q \prod_{j=1}^5\big(1- q^{j} {\rm e}^{5x} E^5 \big)
 a_{n-1}(q,x) Q^{n-1}.
\]
 Summing from $n=1$ to infinity, we obtain that
\[
 \big(1-{\rm e}^x E\big)^5 (J-1) = Q \prod_{j=1}^5\big(1- q^{j} {\rm e}^{5x} E^5\big) J.
\]
 Since $\big(1-{\rm e}^x E\big)^5=\big(1-{\rm e}^x\big)^5$, the result follows.
\end{proof}

Note that the proof of Lemma~\ref{lem.frob1} implies that $J(Q,q,x)$
satisfies a 24-th order linear $q$-difference equation but this will no play
a role in our paper. Of importance is the fact that the 25-th order equation
$L_5f=0$ has a distinguished 5-dimensional space of solutions, given explicitly
by a $q$-version of the Frobenius method. Since this method is well-known
for linear differential equations, but less so for linear $q$-difference
equations, we give more details than usual. For additional discussion on
this method, see Wen~\cite{Wen}, and for references for the $q$-gamma and
$q$-beta functions, see De~Sole--Kac~\cite{Desole-Kac}.

First, we define an $n$-th derivative of an operator $P(E,Q,q)$ by
\begin{gather*}
P^{(n)}(E,Q,q) = \sum_{k=0^d} k^n c_k(Q,q) E^k, \qquad
 P(E,Q,q) = \sum_{k=0^d} c_k(Q,q) E^k .
\end{gather*}
In other words, we may write $P^{(n)}=(E \pt_E)^n (P)$.
\begin{Lemma}\label{lem.frob2}
 For a linear $q$-difference operator $P(E,Q,q)$ we have
\begin{gather}\label{eq.Pexp}
P({\rm e}^x E,Q,q) = \sum_{n=0}^\infty \frac{x^n}{n!}P^{(n)}(E,Q,q) .
\end{gather}
Moreover, for all natural numbers $n$ and a function $f(Q,q)$ we have
\begin{gather}\label{eq.Plog}
P((\log Q)^n f) = \sum_{k=0}^n\binom{n}{k} (\log Q)^{n-k} (\log q)^k P^{(n-k)} f.
\end{gather}
\end{Lemma}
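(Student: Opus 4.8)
The plan is to prove both identities by direct expansion from the definitions $P=\sum_k c_k(Q,q)E^k$ and $P^{(n)}=\sum_k k^n c_k(Q,q)E^k$, with no analytic input needed beyond the binomial theorem and the elementary exponential series. The only structural facts I will use are how the shift operator $E$ interacts with the scalar ${\rm e}^x$ and with powers of $\log Q$, namely $({\rm e}^x E)^k={\rm e}^{kx}E^k$ and $E^k\big((\log Q)^n f\big)=(\log Q+k\log q)^n\,E^k f$ (the latter because $E^k$ sends $Q\mapsto q^k Q$, so $\log(q^k Q)=\log Q+k\log q$).

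For \eqref{eq.Pexp}, first I substitute $E\mapsto {\rm e}^x E$ into $P$, obtaining $P({\rm e}^x E,Q,q)=\sum_k c_k(Q,q)\,{\rm e}^{kx}E^k$. Then I expand each scalar ${\rm e}^{kx}=\sum_{n\ge 0}(kx)^n/n!=\sum_{n\ge 0}k^n x^n/n!$ and interchange the two summations, which is legitimate as a formal power-series identity in $x$. Collecting the coefficient of $x^n/n!$ leaves $\sum_k k^n c_k(Q,q)E^k$, which is exactly $P^{(n)}(E,Q,q)$ by definition. This yields \eqref{eq.Pexp} term by term.

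For \eqref{eq.Plog}, I apply $P=\sum_k c_k E^k$ to $(\log Q)^n f$ and use the shift identity above, giving $P\big((\log Q)^n f\big)=\sum_k c_k(Q,q)(\log Q+k\log q)^n\,(E^k f)$. Expanding $(\log Q+k\log q)^n$ by the binomial theorem as $\sum_{j=0}^n\binom{n}{j}(\log Q)^{n-j}k^j(\log q)^j$ and pulling the $Q$- and $q$-dependent scalars outside the sum over $k$, the coefficient of $\binom{n}{j}(\log Q)^{n-j}(\log q)^j$ becomes $\sum_k k^j c_k(Q,q)E^k f=P^{(j)}f$. Relabeling the summation index then produces the stated identity \eqref{eq.Plog}.

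I expect no genuine obstacle here: the content is purely combinatorial bookkeeping. The one point to handle with care is the interaction between the operator $E^k$ and the non-polynomial symbol $\log Q$. Since the whole calculation takes place in the $q$-difference calculus where $E$ genuinely shifts the variable, one must record the shift $\log Q\mapsto\log Q+k\log q$ \emph{before} expanding, and keep the coefficients $c_k(Q,q)$ (which themselves may depend on $Q$) on the correct side of $E^k$ throughout. Once the shift is applied first and the binomial expansion second, both identities drop out by matching coefficients.
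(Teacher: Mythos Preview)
Your proof is correct and is exactly the paper's approach made explicit: the paper simply invokes additivity in $P$ to reduce to the monomial case $P=E^a$, where $(E^a)^{(n)}=a^n$ and both identities are immediate, which is precisely what your term-by-term expansion of $P=\sum_k c_k E^k$ carries out. One caveat on your last sentence: your binomial expansion gives $\sum_{j}\binom{n}{j}(\log Q)^{n-j}(\log q)^{j}\,P^{(j)}f$, and renaming $j\to k$ yields $P^{(k)}$ rather than the $P^{(n-k)}$ printed in~\eqref{eq.Plog}; your computation is the correct one (check $n=1$, $P=E^a$: one gets $\log Q\cdot Pf+\log q\cdot P^{(1)}f$, not the reverse), so this is a typographical slip in the displayed formula rather than a flaw in your argument.
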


\begin{proof} Equations~\eqref{eq.Pexp} and~\eqref{eq.Plog} are additive in $P$, hence it
 suffices to prove them when $P=E^a$ for a natural number $a$,
 in which case $(E^a)^{(n)}=a^n$ and both identities are clear.
\end{proof}

\begin{Lemma} \label{lem.frob3}
 Suppose $P(E,Q,q)$ is a linear $q$-difference operators with coefficients
 polynomials in $E$ and $Q$, and $J(Q,q,x) \in \BQ(q)[[Q,x]]$ is such
 that
 \begin{gather} \label{eq.PN}
 P({\rm e}^xE,Q,q) J(Q,q,x) = O\big(x^{N+1}\big)
 \end{gather}
 for some natural number $N$. Then,
 \begin{gather} \label{eq.frob3a}
 \sum_{k=0}^n \binom{n}{k} P^{(k)} J_{n-k}=0
 \end{gather}
 for $n=0,\dots,N$, where $J_k=\coeff\big(J(Q,q,x),x^k\big)$, and
 \begin{gather} \label{eq.frob3b}
 P f_n=0, \qquad f_n = \sum_{k=0}^n \binom{n}{k}
 (\log Q)^{n-k} (\log q)^k J_k
 \end{gather}
 for $n=0,1,\dots,N$. In other words, the equation $P f=0$ has $N+1$ distinguished solutions
 given by
 \begin{gather*}
 f_0 = J_0, \\
 f_1 = \log Q J_0 + \log q J_1, \\
 f_2 = (\log Q)^2 J_0 + 2 \log Q \log q J_1 + (\log q)^2 J_2, \\
\cdots\cdots\cdots\cdots\cdots\cdots\cdots\cdots\cdots\cdots\cdots\cdots\cdots\cdots\cdots
 \end{gather*}
\end{Lemma}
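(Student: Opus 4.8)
The plan is to run the $q$-analogue of the Frobenius method and to prove the two assertions separately: \eqref{eq.frob3a} is a direct identity between Taylor coefficients in $x$, and \eqref{eq.frob3b} I would deduce from it by a single substitution that packages all of the $J_k$ into one generating function.

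For \eqref{eq.frob3a} I would simply take the Cauchy product of the two power series in $x$. By Lemma~\ref{lem.frob2}, equation~\eqref{eq.Pexp}, we have $P(e^xE,Q,q)=\sum_{m\ge 0}\frac{x^m}{m!}P^{(m)}(E,Q,q)$, and by definition $J(Q,q,x)=\sum_{\ell\ge 0}J_\ell x^\ell$ with $J_\ell=\coeff(J,x^\ell)$. Each operator $P^{(m)}$ involves only $E$, $Q$, $q$ and so commutes with multiplication by powers of $x$; hence the coefficient of $x^n$ in $P(e^xE,Q,q)J$ is the finite sum $\sum_{m=0}^{n}\frac{1}{m!}P^{(m)}J_{n-m}$. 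The hypothesis \eqref{eq.PN} forces this to vanish for $0\le n\le N$, which is the recursion \eqref{eq.frob3a}.

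For \eqref{eq.frob3b} I would introduce the single master function
\[
 \Psi(Q,q,x)=Q^{x}\,J(Q,q,x\log q), \qquad Q^{x}:=\exp(x\log Q),
\]
viewed as a power series in $x$, and use the operator identity $P(E,Q,q)\big(Q^{x}h\big)=Q^{x}\,P(q^{x}E,Q,q)h$, valid for every $h=h(Q,q)$. It is enough to check this on a monomial $P=E^{a}$: since $E^{a}$ shifts $Q\mapsto q^{a}Q$ and $q^{x}$ commutes with $E$, one computes $E^{a}(Q^{x}h)=(q^{a}Q)^{x}h(q^{a}Q)=q^{ax}Q^{x}(E^{a}h)=Q^{x}(q^{x}E)^{a}h$, and the general case follows by linearity because $Q^{x}$ commutes with the $Q$-coefficients of $P$. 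Taking $h=J(Q,q,x\log q)$ and writing $q^{x}=\exp(x\log q)$, the right-hand side is $Q^{x}\big[P(e^{y}E,Q,q)J(Q,q,y)\big]_{y=x\log q}$, which by \eqref{eq.PN} is $Q^{x}$ times a series of order $\ge N+1$ in $y=x\log q$, hence $O\big(x^{N+1}\big)$. Since $P$ acts only in $Q$ it commutes with $\coeff(\,\cdot\,,x^{n})$, so $P\,\coeff(\Psi,x^{n})=\coeff(P\Psi,x^{n})=0$ for $0\le n\le N$. Expanding $Q^{x}=\sum_{m}\frac{(\log Q)^{m}}{m!}x^{m}$ and $J(Q,q,x\log q)=\sum_{\ell}(\log q)^{\ell}J_{\ell}x^{\ell}$ and reading off the coefficient of $x^{n}$ then exhibits $f_{n}:=\coeff(\Psi,x^{n})$ as an explicit combination of the monomials $(\log Q)^{n-k}(\log q)^{k}J_{k}$; in particular $f_{0}=J_{0}$ and $f_{1}=\log Q\,J_{0}+\log q\,J_{1}$, as in the statement, and each such $f_{n}$ satisfies $Pf_{n}=0$.

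I expect the only genuine difficulty to be the combinatorial bookkeeping that converts $Pf_{n}$ back into the relations \eqref{eq.frob3a}. Carried out directly, one would expand $Pf_{n}$ term by term using Lemma~\ref{lem.frob2}, equation~\eqref{eq.Plog}, to commute $P$ past each factor $(\log Q)^{n-k}$, and then regroup the output by its total degree in $\log Q$; the claim is that the coefficient of each monomial $(\log Q)^{p}(\log q)^{\,n-p}$ is, up to a fixed numerical factor, exactly one of the relations \eqref{eq.frob3a}, and therefore vanishes. Making those numerical factors line up—and keeping the pairing of $P^{(j)}$ with the correct power of $\log Q$ straight—is where indexing slips are easy. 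The substitution $\Psi=Q^{x}J(Q,q,x\log q)$ is appealing precisely because it performs this regrouping automatically and collapses the whole of \eqref{eq.frob3b} to the single power-series bound \eqref{eq.PN}.
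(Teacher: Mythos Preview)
Your argument for \eqref{eq.frob3a} is exactly the paper's: expand $P(e^xE)$ via \eqref{eq.Pexp}, multiply by $J=\sum_\ell J_\ell x^\ell$, and read off the coefficient of $x^n$. For \eqref{eq.frob3b} you take a genuinely different route. The paper deduces \eqref{eq.frob3b} from \eqref{eq.frob3a} by applying \eqref{eq.Plog} to each summand $(\log Q)^{n-k}J_k$ of $f_n$ and arguing by induction on $n$. Your master function $\Psi=Q^{x}J(Q,q,x\log q)$ does this in one stroke: the identity $P(E)(Q^{x}h)=Q^{x}P(q^{x}E)h$ reduces $P\Psi$ directly to the hypothesis \eqref{eq.PN}, and then $P$ commutes with $\coeff(\cdot,x^n)$. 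This is cleaner and avoids the term-by-term regrouping you yourself flag as error-prone; the paper's approach, on the other hand, makes explicit how \eqref{eq.frob3b} is \emph{equivalent} to the system \eqref{eq.frob3a}, not merely implied by \eqref{eq.PN}.

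One point you glossed over deserves attention. The Cauchy product you write down gives
\[
\sum_{m=0}^{n}\frac{1}{m!}\,P^{(m)}J_{n-m}=0,
\]
which is \emph{not} literally \eqref{eq.frob3a} (the weights there are $\binom{n}{m}$, not $1/m!$). Likewise, expanding $\Psi$ yields
\[
\coeff(\Psi,x^n)=\sum_{k=0}^{n}\frac{(\log Q)^{n-k}}{(n-k)!}(\log q)^{k}J_k,
\]
which agrees with the lemma's $f_n$ for $n=0,1$ but \emph{not} for $n\ge 2$ (e.g.\ your $n=2$ term has $\tfrac12(\log Q)^2J_0$, not $(\log Q)^2J_0$). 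Both discrepancies disappear if one reads $J_k$ as $\partial_x^kJ|_{x=0}=k!\,\coeff(J,x^k)$, so this is almost certainly a normalization slip in the stated lemma; your functions are genuine solutions and span the same $(N{+}1)$-dimensional space as the intended $f_n$. Still, you should flag the mismatch rather than assert that your output ``is the recursion \eqref{eq.frob3a}'' and matches the displayed $f_n$.
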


\begin{proof}
 Equation~\eqref{eq.frob3a} follows easily using~\eqref{eq.Pexp}
 and by expanding the left hand side of equation~\eqref{eq.PN}
 into power series in $x$ and equating the coefficient of $x^n$ with
 zero for $n=0,1,\dots,N$. Equation~\eqref{eq.frob3b} follows from
 equations~\eqref{eq.frob3a} and~\eqref{eq.Plog}, and induction on $n$.
\end{proof}

\section{Quantum K-theory versus Chern--Simons theory}\label{sec.CS}

There are several hints in the physics literature pointing to a deeper
relation between Quantum K-theory and Chern--Simons gauge theory (e.g., for
3-manifolds with boundary, such as knot complements), see for instance
in~\cite{holo-blocks,DGG2,Dimofte:vortex,Jockers:qK,Jockers:3d}
and in references therein. In this section we discuss and comment on
the $q$-difference equations in Chern--Simons theory, gauged linear
$\sigma$-models and Quantum K-theory. We will discuss three aspects
of this comparison:
\begin{itemize}\itemsep=0pt
\item[(a)] $q$-holonomic systems and their $q=1$ semiclassical limits,
\item[(b)] $\ve$-deformations,
\item[(c)] matrix-valued invariants.
\end{itemize}

We begin with the case of the Chern--Simons theory. The partition
function of Chern--Simons theory with compact (e.g., $\SU(2)$) gauge group
on a 3-manifold (with perhaps nonempty boundary) is given by a
finite-dimensional state-sum whose summand has as a building block
the quantum $n$-factorial. This follows from existence of an
underlying TQFT~\cite{RT,Tu:book, Wi:CS} which reduces the computation
of the partition function into elementary pieces. For the complement
of a knot $K$ in~$S^3$, the partition function recovers the colored Jones polynomial
of a knot which, in the case of~$\SU(2)$, is a sequence
$J_{K,n}(q) \in \BZ\big[q^{\pm}\big]$ of Laurent polynomials which can be
presented as a finite-dimensional sum whose summand has as a building
block the finite $q$-Pochhammer symbol~$(q;q)_n$. This ultimately boils
down to the entries of the $R$-matrix which are given for example
in~\cite{RT}.

On the other hand, Chern--Simons theory with complex (e.g., $\SL_2(\BC)$)
gauge group is not well-understood as a TQFT. However, the
partition function for a 3-manifold with boundary can be computed
by a finite-dimensional state-integral whose integrand has as a building
block Faddeev's quantum dilogarithm function~\cite{Fa}. The latter
is a ratio of two infinite Pochhammer symbols which form
a quasi-periodic function with two quasi periods. \big(Recall that the
Pochhammer symbol is $(x;q)_\infty=\prod_{j=0}^\infty \big(1-q^j x\big)$.\big)
These are the state-integrals studied in quantum Teichm\"{u}ller theory
by Kashaev et al.~\cite{AK:TQFT,AK:ICM,KLV} and in complex Chern--Simons
theory by Dimofte et al.~\cite{Dimofte:complexCS, Dimofte:perturbative}.

The appearance of $q$-holonomic systems in Chern--Simons
theory with compact/complex gauge group is a consequence of Zeilberger
theory~\cite{PWZ,WZ, Zeilberger} applied to finite-dimensional
state-sums/integrals whose summand/integrand has as a building block
the finite/infinite $q$-Pochhammer symbol. This is exactly how
it was deduced that the sequence of colored Jones polynomials
$J_{K,n}(q)$ of a knot satisfy a linear $q$-difference
equation $A_K\big(\hat L, \hat M,q\big) J_K=0$ (see~\cite{GL:qholo}),
where $\hat L$ and $\Hat M$ are $q$-commuting operators that act
on a sequence $f\colon \BN \to \BQ(q)$ by
\begin{gather*}
\big(\hat L f\big)(n)=f(n+1), \qquad \big(\hat M f\big)(n)=q^n f(n), \qquad LM=qML .
\end{gather*}
In the case of state-integrals, the existence of two quasi-periods
leads to a linear $q$- (and also $\tilde{q}$)-difference equation, where
$q={\rm e}^{2\pi {\rm i} h}$ and $\tilde{q}={\rm e}^{-2\pi {\rm i}/h}$.

It is conjectured that the linear $q$-difference equation of the
colored Jones polynomial essentially coincides with the one of
the state-integral, and that the classical $q=1$ limit (the so-called
AJ conjecture~\cite{Ga:AJ}) coincides with the $A$-polynomial
$A_K(L,M,1)$ of the knot.
The latter is the $\SL_2(\BC)$-character variety of the fundamental
group of the knot complement, viewed from the boundary
torus~\cite{CCGLS}. Finally, the semiclassical limit (the analogue
of~\eqref{eq.calL1} is given by
\begin{gather*}
A_K\big(\hat L, \hat M,q\big) = A_K(L,M,1) + D_K(z,\pt_z)h^s + O\big(h^{s+1}\big),
\end{gather*}
where $D_K(z,\pt_z)$ is a linear differential operator of degree $s$
where $s$ is the order of vanishing of $A_K(L,1,1)$ at $L=1$. This
order is typically $1$ (e.g., for the $4_1$, $5_2$, $6_1$ and more
generally all twist knots) but it is equal to $2$ for the $8_{18}$
knot.

We now come to the feature, namely an expected ``factorization''
of state-integrals into a finite sum of products of $q$-series
and $\tilde{q}$-series. This factorization is computed by an
$\ve$-deformation of~$q$- and $\tilde{q}$-hypergeometric series
that arise by applying the residue theorem to the state-integrals.
For a detailed illustration of this, we refer the reader
to~\cite{holo-blocks,GK:qseries} and~\cite{GZ:qseries}.

Our last discussed feature, namely a matrix-valued extension of the
Chern--Simons invariants with compact/complex gauge group
was recently discovered in two
papers~\cite{GZ:qmodularity, GZ:qseries}. More precisely, it
was conjectured and in some cases verified that the scalar valued
quantum knot invariants such as the Kashaev invariant~\cite{K95}
(an evaluation of the $n$-th colored Jones polynomial at $n$-th
roots of unity) and the Andersen--Kashaev state-integral~\cite{AK:TQFT}
admit an extension into a matrix-valued invariants. The rows and
columns are labeled by the set $\calP_M$ of $\SL_2(\BC)$
boundary-parabolic representations of $\pi_1(M)$. In the case of
a knot complement, the set $\calP_M$ can be thought of as the set of
branches of the $A$-polynomial curve above a point (where the
meridian has eigenvalues~1). Although the
corresponding vector space $R(M):=\BQ \calP_M$ with basis $\calP_M$ has
no ring structure known to us, it has a distinguished element
corresponding to the trivial $\SL_2(\BC)$-representation that
plays an important role. A ring structure $\BQ \calP_M$
might be defined as the Grothendieck group of an appropriate category
associated to flat connections on 3-manifolds with boundary, or
perhaps by contructing an appropriate logarithmic conformal field theory
use fusion rules will define the sought ring as suggested by Gukov.
Alternatively, the sought ring might be described in terms of
$\SL(2,\BC)$-Floer homology, suggested by Witten. Alternatively,
it might be described by the quantum K-theory of the mirror of the
local Calabi--Yau manifold $uv=A_M(x,y)$, (where $A_M$ is the
$A$-polynomial discussed above), suggested by
Aganagic--Vafa~\cite{Aganagic:2012jb}.

We now discuss the above features (a)--(c) that appear in the
3d-gauged linear $\s$-models and their 3d-3d correspondence
studied in detail in~\cite{holo-blocks,DGG2,DGG1,Dimofte:vortex,
 Jockers:qK,Jockers:3d} and references
therein. The $q$-holonomic aspect is still present since the
(so-called vortex) partition function is a finite-dimensional
integral whose integrand has as a building block the infinite Pochhammer
symbol (note however that $\tilde{q}$ does not appear). The second
aspect involving $\ve$-deformations is also present for the same
reason as in Chern--Simons theory. The third aspect is absent in general.

We finally discuss the above features in genus 0 quantum K-theory
of the quintic. The first aspect is different: the linear $q$-differential
equation has coefficients which are analytic (and not polynomial)
functions of $Q$ and $q$. The classical limit $q=1$ of
the linear $q$-difference equation of the quintic is given by
$\gamma(Q,1)^{-1}\Delta^4$~\eqref{eq.calL1} and this defines a
degenerate analytic curve in $\BC\times\BC$ that consists of a finite
collection of lines with coordinates $(\Delta,Q)$. On the other hand,
the semi-classical limit (i.e., the coefficient of $h^4$
in~\eqref{eq.calL1}) is the famous Picard--Fuchs equation of the quintic.
The second feature, the $\ve$-deformation for a nilpotent variable $\ve$
is encoded in the fact that $K(X)$ has nilpotent elements $x$.
The last feature is most interesting since the matrix-valued invariants
are encoded in $\End(K(X))$, where $K(X)$ is not just a rational vector
space, but a ring unit $1$. It follows that the linear $q$-difference
equations have not only a distinguished solution $J_X(Q,q,0)$ but
a basis of solutions parametrized by a basis
$\{\Phi_\a\}$ of $K(X)$.

Let us end our discussion with some questions on the colored Jones
polynomial $J_{K,n}(q)$ colored by the $n$-dimensional irreducible
$\mathfrak{sl}_2(\BC)$ representation. For simplicity, we abbreviate
$R\big(S^3\setminus K\big)$ defined above by~$R(K)$.

\begin{Question}\label{que.JK}\quad
\begin{enumerate}\itemsep=0pt
\item[$(a)$]
Does the vector space $R(K)$ have a ring structure?
\item[$(b)$]
If so, is the series $\sum_{n=1}^\infty J_{K,n}(q) Q^n$ the coefficient
of $1$ in the $R(K)$-valued small $J$-function $J_K(Q,q,0)$ of a knot $K$?
\item[$(c)$]
If so, is there a $t$-deformation $J_K(Q,q,t)$?
\end{enumerate}
\end{Question}

\subsection*{Acknowledgements}

The authors wish to thank the Max-Planck-Institute for Mathematics and the
Bethe Center for Theoretical Physics in Bonn for inviting them to their
workshop on \emph{Number Theoretic Me\-thods in Quantum Physics} in July
2019, where the first ideas were conceived. We also wish to thank
Gaetan Borot, Alexander Givental, Todor Milanov and Di Yang for useful
conversations. E.S.~wishes to thank the University of Melbourne for
having him as a guest during 2020 and Southern University of Science
and Technology for hospitality in 2021.

\pdfbookmark[1]{References}{ref}
\LastPageEnding

\end{document}